\definecolor{verylight}{gray}{0.97}
\definecolor{light}{gray}{0.9}
\definecolor{medium}{gray}{0.85}
\definecolor{dark}{gray}{0.6}
\def\NZQ{\mathbb}               
\def\ZZ{{\NZQ Z}}
\def\frk{\mathfrak}               
\def\mm{{\frk m}}
\def\nn{{\frk n}}
\def\G{{\mathcal G}}
\def\pd{\textup{proj}\phantom{.}\!\textup{dim}}
\def\opn#1#2{\def#1{\operatorname{#2}}} 
\opn\chara{char} \opn\length{\ell} \opn\pd{pd} \opn\rk{rk}
\opn\projdim{proj\,dim} \opn\injdim{inj\,dim} \opn\rank{rank}
\opn\depth{depth} \opn\grade{grade} \opn\height{height}
\opn\embdim{emb\,dim} \opn\codim{codim}
\opn\Tr{Tr} \opn\bigrank{big\,rank}
\opn\superheight{superheight}\opn\lcm{lcm}
\opn\trdeg{tr\,deg}
	\opn\reg{reg} \opn\lreg{lreg} \opn\ini{in} \opn\lpd{lpd}
	\opn\size{size} \opn\sdepth{sdepth}
	\opn\link{link}\opn\fdepth{fdepth}\opn\lex{lex}
	\opn\tr{tr}
	\opn\type{type}
	\opn\gap{gap}
	\opn\diam{diam}
	\opn\Mod{Mod}
	\opn\div{div} \opn\Div{Div} \opn\cl{cl} \opn\Cl{Cl}
	\opn\Spec{Spec} \opn\Supp{Supp} \opn\supp{supp} \opn\Sing{Sing}
	\opn\Ass{Ass} \opn\Min{Min}\opn\Mon{Mon}
	\opn\Ann{Ann} \opn\Rad{Rad} \opn\Soc{Soc}
	\opn\Im{Im} \opn\Ker{Ker} \opn\Coker{Coker} \opn\Am{Am}
	\opn\Hom{Hom} \opn\Tor{Tor} \opn\Ext{Ext} \opn\End{End}
	\opn\Aut{Aut} \opn\id{id}
	\opn\nat{nat}
	\opn\pff{pf}
	\opn\Pf{Pf} \opn\GL{GL} \opn\SL{SL} \opn\mod{mod} \opn\ord{ord}
	\opn\Gin{Gin} \opn\Hilb{Hilb}\opn\sort{sort}
	\opn\PF{PF}\opn\Ap{Ap}
	\opn\dist{dist}
	\opn\aff{aff}
	\opn\relint{relint} \opn\st{st}
	\opn\lk{lk} \opn\cn{cn} \opn\core{core} \opn\vol{vol}  \opn\inp{inp} \opn\nilpot{nilpot}
	\opn\link{link} \opn\star{star}\opn\lex{lex}\opn\set{set}
	\opn\width{wd}
	\opn\Fr{F}
	\opn\QF{QF}
	\opn\G{G}
	\opn\type{type}\opn\res{res}
	\opn\conv{conv}
	\opn\sr{sr}
	\opn\gr{gr}
	\def\pot#1#2{#1[\kern-0.28ex[#2]\kern-0.28ex]}
	\opn\dirlim{\underrightarrow{\lim}}
	\opn\inivlim{\underleftarrow{\lim}}
	\def\Implies{\ifmmode\Longrightarrow \else
		\unskip${}\Longrightarrow{}$\ignorespaces\fi}
	\def\implies{\ifmmode\Rightarrow \else
		\unskip${}\Rightarrow{}$\ignorespaces\fi}
	\def\iff{\ifmmode\Longleftrightarrow \else
		\unskip${}\Longleftrightarrow{}$\ignorespaces\fi}
	\newtheorem{Theorem}{Theorem}
	\newtheorem{Theorem1}{Theorem1}
	\newtheorem{Theorem2}{Theorem2}
	\newtheorem{Theorem3}{Theorem3}
	\newtheorem{Theorem4}{Theorem4}
	\newtheorem{Theorem5}{Theorem5}
	\newtheorem{Theorem6}{Theorem6}
	\newtheorem{Theorem7}{Theorem7}
	\newtheorem{Theorem8}{Theorem8}
	\newtheorem{Lemma}[Theorem1]{Lemma}
	\newtheorem{Corollary}[Theorem2]{Corollary}
	\newtheorem{Proposition}[Theorem3]{Proposition}
	\newtheorem{Remark}[Theorem4]{Remark}
	\newtheorem{Examples}[Theorem5]{Examples}
	\newtheorem{Example}[Theorem6]{Example}
	\newtheorem{Definition}[Theorem7]{Definition}
	\newtheorem{Question}[Theorem8]{Question}
	\let\epsilon\varepsilon
	\let\kappa=\varkappa
	\def\qed{\ifhmode\textqed\fi
		\ifmmode\ifinner\hfill\quad\qedsymbol\else\dispqed\fi\fi}
	\def\textqed{\unskip\nobreak\penalty50
		\hskip2em\hbox{}\nobreak\hfill\qedsymbol
		\parfillskip=0pt \finalhyphendemerits=0}
	\def\dispqed{\rlap{\qquad\qedsymbol}}
	\opn\dis{dis}
	\def\pnt{{\raise0.5mm\hbox{\large\bf.}}}
	\opn\Lex{Lex}
	\opn\Max{Max}
	\opn\Shad{Shad}
	\opn\astab{astab}
	\opn\v{v}
\begin{document}
	
	\title{Cohen-Macaulayness of vertex splittable monomial ideals}
	\author{Marilena Crupi, Antonino Ficarra}
	
	\address{Marilena Crupi, Department of mathematics and computer sciences, physics and earth sciences, University of Messina, Viale Ferdinando Stagno d'Alcontres 31, 98166 Messina, Italy}
	\email{mcrupi@unime.it}
	
	\address{Antonino Ficarra, Department of mathematics and computer sciences, physics and earth sciences, University of Messina, Viale Ferdinando Stagno d'Alcontres 31, 98166 Messina, Italy}
	\email{antficarra@unime.it}
	
	\subjclass[2020]{Primary 13C15, 05E40, 05C70}
	\keywords{Cohen-Macaulay ideals, vertex splittable ideals.}
	\date{}

 	\begin{abstract}
		In this paper, we give a new criterion for the Cohen--Macaulayness of vertex splittable ideals, a family of monomial ideals recently introduced by Moradi and Khosh-Ahang. Our result relies on a Betti splitting of the ideal and provides an inductive way of checking the Cohen--Macaulay property. As a result, we obtain characterizations for Gorenstein, level and pseudo-Gorenstein vertex splittable ideals. Furthermore, we provide new and simpler combinatorial proofs of known Cohen--Macaulay criteria for several families of monomial ideals, such as (vector-spread) strongly stable ideals and (componentwise) polymatroidals. Finally, we characterize the {\color{black} family} of bi-Cohen--Macaulay graphs by the novel criterion for the Cohen--Macaulayness of vertex splittable ideals.
	\end{abstract}
	
	\maketitle
	
	\section{Introduction}
	Let $S=K[x_1,\dots,x_n]$ be the polynomial ring with coefficients in a field $K$. In \cite{MKA16}, Moradi and Khosh-Ahang introduced the notion of a \emph{vertex splittable ideal}, an algebraic analog of the vertex decomposability property of a simplicial complex. In more detail, let $\Delta$ be a simplicial complex and let $F$ be a face of $\Delta$. One can associate with $\Delta$ two special simplicial complexes: the~deletion of $F$, defined as $\textup{del}_{\Delta}(F) = \{G\in \Delta : F\cap G=\emptyset\}$, and~the link of $F$, defined as $\textup{lk}_{\Delta}(F) = \{G\in \Delta : F\cap G=\emptyset, F\cup G\in \Delta\}$. For~$F=\{x\}$, one sets $\textup{del}_{\Delta}(\{x\})= \textup{del}_{\Delta}(x)$ and $\textup{lk}_{\Delta}(\{x\})=\textup{lk}_{\Delta}(x)$. The~notion of vertex decomposition was introduced  by Provan and Billera~\cite{PB1980} for a pure simplicial complex, and afterwards, it was extended to nonpure complexes by Bjo\"rner and Wachs~\cite{BW1997}. A~\emph{vertex decomposable simplicial complex} $\Delta$ is
	recursively defined as follows: $\Delta$ is a simplex or $\Delta$ has some vertex
	$x$ such that (1) both $\textup{del}_{\Delta}x$ and $\textup{lk}_{\Delta}x$ are vertex decomposable, and~(2) there is no face of $\textup{lk}_{\Delta}x$ which is also a facet of $\textup{del}_{\Delta}x$. An~ideal $I$ of $S$ is called \emph{vertex decomposable} if $I=I_{\Delta}$, with $\Delta$ being a vertex decomposable simplicial complex. We recall that $I_{\Delta}$ is the Stanley--Reisner ideal of $\Delta$ over $K$, that is, the~ideal of $S$ generated by the squarefree monomial $x_F= \prod_{x_j\in F}x_j$, for~all $F\in \Delta$. It is well-known that for a simplicial complex $\Delta$, the following implications hold: \emph{vertex decomposable}$\Rightarrow$ \emph{shellable} $\Rightarrow$ \emph{sequentially Cohen--Macaulay} (see, for~instance,~\cite{RV}). Moreover, there exist characterizations of 
	shellable, sequentially Cohen--Macaulay and Cohen--Macaulay complexes $\Delta$ via the Alexander dual ideals $I_{\Delta^\vee}$ (see \cite[Theorem 1.4]{HTZ2004}, \cite[Theorem 2.1]{HH1999}, \cite[Theorem 3]{ER1998}, respectively), where $\Delta^\vee = \{X\setminus F: F\notin \Delta\}$ is the Alexander dual simplicial complex associated with $\Delta$.  
	
	Inspired by the above results, in~\cite{MKA16}, Moradi and Khosh-Ahang asked and solved the following question: \emph{Is it possible to characterize a vertex decomposable simplicial complex $\Delta$ by means of $I_{\Delta^\vee}$?} For this aim, they introduced the notion of the vertex splittable monomial ideal (Definition \ref{def:vertex splittable}) and proved that a simplicial complex $\Delta$ is vertex decomposable if and only if $I_{\Delta^\vee}$ is a vertex splittable ideal \cite[Theorem 2.3]{MKA16}. Moreover, the~authors in~\cite{MKA16} {\color{black} proved} that a vertex splittable ideal has a Betti splitting ({\color{black} see Definition \ref {def:Bettisplitting} and Theorem \ref{thm:MKA16})}. 
	
	{\color{black} Determining when a monomial ideal is Cohen--Macaulay is a fundamental and challenging problem in commutative algebra. Motivated by this and the results of~\cite{MKA16}, in~this paper, we tackle the Cohen--Macaulayness of vertex splittable ideals. Our main contribution (Theorem \ref{Thm:CM-VS}) is a new characterization of the Cohen--Macaulayness of a vertex splittable ideal in terms of a Betti splitting. This new criterion provides a neat and effective inductive strategy to determine when a vertex splittable ideal is Cohen--Macaulay.}
	
	This article is organized as follows. In~Section~\ref{secVS:1}, we recall relevant definitions and auxiliary
	results that we will {\color{black} use later on.}  In Section~\ref{secVS:2}, we state a new criterion for the Cohen--Macaulayness of vertex splittable ideals (Theorem \ref{Thm:CM-VS}). 
	As a consequence, we obtain characterizations for Gorenstein, level and pseudo-Gorenstein Cohen--Macaulay
	vertex splittable ideals (Corollary \ref{Cor:CM-VS}). The~results in this section will be used in the subsequent section (Section \ref{secVS:3}), where we recover some interesting Cohen--Macaulay classifications of families of monomial ideals: (vector-spread) strongly stable ideals and~(componentwise) polymatroidal
	ideals. Moreover,  a~new characterization of bi-Cohen--Macaulay graphs is presented (Theorem \ref{Thm:BCM-VS}). Finally, Section~\ref{sec:5}  contains our conclusions and perspectives.
	
	\section{Preliminaries}
	In this section, we recall the basic notions and notations we will use in the body of the paper~\cite{FHT2009, MKA16}.
	
	Let $S=K[x_1,\dots,x_n]$ be a polynomial ring in $n$ variables over a field $K$ with the standard grading, i.e.,~each $\deg x_i =1$.  For~any finitely generated graded $S$-module $M$, there exists the unique minimal graded free $S$-resolution
	\begin{equation*}\label{eq:free}
		F: 0\rightarrow F_p\xrightarrow{\ d_p\ } F_{p-1}\xrightarrow{d_{p-1}}\cdots\xrightarrow{\ d_2\ }F_1\xrightarrow{\ d_1\ }F_0\xrightarrow{\ d_0\ }M\rightarrow0,
	\end{equation*}
	with $F_i=\bigoplus_jS(-j)^{\beta_{i,j}}$. The~numbers $\beta_{i,j}= \beta_{i,j}(M)$ are called the graded Betti numbers of $M$, while $\beta_i(M) = \sum_j \beta_{i,j}(M)$ are called the total Betti numbers of $M$. Recall that the \emph{projective dimension} and the \emph{Castelnuovo--Mumford regularity} of $M$ are defined as follows:
	\begin{align*}
		\pd M\ & =\ \max\{i\ :\ \beta_i(M)\neq 0\},\\
		\reg M\ &=\ \max\{\mbox{$j-i$\ :\ $\beta_{i,j}(M)\ne0$, for~some  $i$ and $j$}\}.
	\end{align*}
	
	More precisely, the~projective dimension $\pd(M)$ is the length of a minimal graded free resolution of the finitely generated graded $S$-module $M$. 
	
	\subsection{Cohen-Macaulay~Property}\label{sub1}
	
	In this subsection, we introduce the notion of the Cohen--Macaulay ring and some related~notions.
	
	Firstly, let $\mm=(x_1,\dots,x_n)$ be the unique maximal homogeneous ideal of $S$, and let $M$ be a finitely generated graded $S$-module $M$.
	
	A sequence ${\bf f} = f_1,\ldots, f_d$ of homogeneous elements of $\mm$ is
	called an \emph{$M$-sequence}  if the following criteria are met:
	\begin{enumerate}
		\item[(a)]  the multiplication map $M/(f_1,\ldots, f_{i-1})M\xrightarrow{f_i}M/(f_1,\ldots, f_i)M$ is injective for all $i$. 
		\item[(b)] $M/({\bf f} )M\neq 0$. 
	\end{enumerate}
	
	The length of a maximal homogeneous $M$-sequence is called the \emph{depth} of $M$. By~the Auslander--Buchsbaum formula (see, for~instance,~\cite{BH}), we have
	\begin{equation}\label{eq:ABformula}
		\depth M + \pd M = n.
	\end{equation}
	
	A finitely generated graded $S$-module $M$ is called a \emph{Cohen--Macaulay module} (CM module for~short) if $\depth M = \dim M$, where $\dim M$ is the Krull dimension of $M$ \cite{BH}. Let $I$ be a graded ideal of $S$; the~graded ring $S/I$ is said to be CM if $S/I$, viewed as an $S$-module, is CM. The~graded ideal $I$ is called a CM~ideal.

	Let $I\subset S$ be a graded CM ideal, and~let $p=\pd(S/I)$ be the projective dimension of $S/I$. The~\textit{Cohen--Macaulay type} (CM type for~short) of $S/I$ is defined as the integer $\textup{CM-type}(S/I)=\beta_p(S/I)$. It is well-known that $S/I$ is Gorenstein if and only if $\textup{CM-type}(S/I)=1$. We say that $I$ is Gorenstein if $S/I$ is~such.
	
	By \cite[Corollary 2.17]{HHOBook}, the graded Betti number $\beta_{p,p+\reg S/I}(S/I)$ is always nonzero. 
	We say that $S/I$ is \textit{level} if and only if $\beta_p(S/I)=\beta_{p,p+\reg S/I}(S/I)$. Following~\cite{EHHM15}, we say that $S/I$ is \textit{pseudo-Gorenstein} if and only if $\beta_{p,p+\reg S/I}(S/I)=1$. Hence, $S/I$ is Gorenstein if and only if it is both level and~pseudo-Gorenstein.
	
	For more details on this subject, see, for instance,~\cite{BH, JT, EHGB,HHOBook}.

	\subsection{Vertex Splittable Monomial~Ideals}\label{secVS:1}
	
	In this subsection, we discuss the notions of vertex splittable monomial ideals and of Betti~splittings.
	
	Let $I\subset S$ be a monomial ideal.  
	We denote by $\mathcal{G}(I)$ the unique minimal monomial generating set of $I$. We recall the following notion \cite[Definition 2.1]{MKA16}.
	\begin{Definition}\label{def:vertex splittable}
		\rm The ideal $I$ is called \textit{vertex splittable} if it can be obtained by the following recursive~procedure:
		\begin{enumerate}
			\item[\textup{(i)}] If $u$ is a monomial and $I=(u)$, $I=0$ or $I=S$, then $I$ is vertex splittable.\smallskip
			\item[\textup{(ii)}] If there exists a variable $x_i$ and vertex splittable ideals $I_1\subset S$ and $I_2\subset K[x_1,\dots,x_{i-1},$ \newline $ x_{i+1}, \dots,x_n]$ such that $I=x_iI_1+I_2$, $I_2\subseteq I_1$ and $\mathcal{G}(I)$ is the union of $\mathcal{G}(x_iI_1)$ and $\mathcal{G}(I_2)$, then $I$ is vertex splittable. \\ In this case, we say that $I=x_iI_1+I_2$ is a \textit{vertex splitting} of $I$ and $x_i$ is a \textit{splitting vertex} of $I$.
		\end{enumerate}
	\end{Definition}
	
	\begin{Remark}\rm {\color{black} One can observe that while in general, the~Cohen--Macaulayness of $S/I$ depends on the field $K$ (\cite[page 236]{BH}, if~$I$ is a vertex splittable ideal, then this is not the case.  
			Indeed, the~Krull dimension of $S/I$, where $I$ is a monomial ideal, does not depend on $K$. Furthermore, by~\cite[Theorem 2.4]{MKA16}, vertex splittable ideals have linear quotients. Hence, $\depth S/I$ is also independent from $K$. }
	\end{Remark}
	
	In~\cite{FHT2009}, the~next concept was~introduced.
	
	\begin{Definition}\label{def:Bettisplitting}
		\rm Let $I$, $J$, $L$ be monomial ideals of $S$ such that $\mathcal{G}(I)$ is the disjoint union of $\mathcal{G}(J)$ and $\mathcal{G}(L)$. We say that $I=J+L$ is a \textit{Betti splitting} if
		\begin{equation}\label{eq:BettiSplittingI=P+Q}
			\beta_{i,j}(I)=\beta_{i,j}(J)+\beta_{i,j}(L)+\beta_{i-1,j}(J\cap L), \ \ \ \textup{for all}\ i,j.
		\end{equation}
	\end{Definition}
	
	When $I = J+L$ is a Betti splitting, important homological invariants of the ideal $I$ are related to the invariants of the smaller ideals $J$ and $L$. Indeed, in~
	\cite[Corollary 2.2]{FHT2009}, it is proved that if $I=J+L$ is a Betti splitting, then
	\begin{equation}\label{eq:pdBettiSplit}
		\pd I = \max\{\pd J, \pd L, \pd (J\cap L)+1\}.
	\end{equation}
	
	We quote the next crucial result from~\cite{MKA16}.
	\begin{Theorem}\label{thm:MKA16}{\rm \cite[Theorem 2.8]{MKA16}} Let $I = xI_1+ I_2$  be a vertex splitting for the monomial ideal $I$ of $S$. Then $I = xI_1+ I_2$ is a Betti splitting.
	\end{Theorem}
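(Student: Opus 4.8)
The plan is to verify the identities~\eqref{eq:BettiSplittingI=P+Q} for the splitting $I = J + L$ with $J = xI_1$ and $L = I_2$ by means of the Mayer--Vietoris exact sequence
\[
0 \longrightarrow J \cap L \longrightarrow J \oplus L \longrightarrow I \longrightarrow 0
\]
and the long exact sequence it induces on $\Tor_\bullet^S(K,-)$. As in~\cite{FHT2009}, $I = J+L$ is a Betti splitting exactly when, for every $k$, the map $\Tor_k^S(K, J\cap L) \to \Tor_k^S(K,J) \oplus \Tor_k^S(K,L)$ coming from the two inclusions is zero (equivalently, the long exact sequence breaks up into short exact sequences); the case $k = 0$ holds automatically since $\mathcal{G}(I)$ is the disjoint union of $\mathcal{G}(J)$ and $\mathcal{G}(L)$. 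A first reduction computes the intersection: because $I_2 \subseteq I_1$ and no element of $\mathcal{G}(I_2)$ is divisible by $x$, a monomial lies in $xI_1 \cap I_2$ if and only if it is $x$ times a monomial of $I_2$, so $J \cap L = xI_2$.

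The component of that map into $\Tor_k^S(K,L) = \Tor_k^S(K,I_2)$ vanishes for a formal reason: multiplication by $x$ is an isomorphism $I_2(-1) \xrightarrow{\ \sim\ } xI_2$ that identifies the inclusion $xI_2 \hookrightarrow I_2$ with multiplication by $x$, and $\mm$ annihilates $\Tor_k^S(K,-)$. Similarly, the isomorphisms $I_1(-1) \cong xI_1$ and $I_2(-1) \cong xI_2$ identify the component into $\Tor_k^S(K,J) = \Tor_k^S(K,xI_1)$ with the map $\Tor_k^S(K,I_2) \to \Tor_k^S(K,I_1)$ induced by $I_2 \hookrightarrow I_1$. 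So the whole statement reduces to the following claim: \emph{if $A \subseteq B$ are vertex splittable monomial ideals with $\mathcal{G}(B) \cap A = \emptyset$, then the inclusion $A \hookrightarrow B$ induces the zero map $\Tor_k^S(K,A) \to \Tor_k^S(K,B)$ for all $k$.} Here $\mathcal{G}(I_1) \cap I_2 = \emptyset$ is precisely the generator condition of Definition~\ref{def:vertex splittable}, and (together with $I_2\subseteq I_1$) it forces $A \subseteq \mm B$.

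I would prove the claim and the theorem by a single induction, say on the number of minimal generators of the larger ideal. If $B$ is a principal ideal (or $0$, or $S$), then $\Tor_k^S(K,B) = 0$ for $k \ge 1$ while $A \subseteq \mm B$ kills the map on $\Tor_0^S$. If $B = x_j B_1 + B_2$ is a vertex splitting with $B_1 \ne 0$ and $B_2 \ne 0$, this splitting is a Betti splitting by the inductive hypothesis, so the minimal free resolution of $B$ is the (minimal) mapping cone assembled from those of $x_j B_1$, $B_2$, and $x_j B_1 \cap B_2 = x_j B_2$. Decomposing $A$ along $x_j$ as $A = x_j A' + A''$, one checks that $A' \subseteq B_1$, $A'' \subseteq B_2$, that $\mathcal{G}(B_1) \cap A' = \emptyset$ and $\mathcal{G}(B_2) \cap A'' = \emptyset$, and that this expression is again a vertex splitting of $A$ (so $A', A''$ are vertex splittable); since $|\mathcal{G}(B_1)|, |\mathcal{G}(B_2)| < |\mathcal{G}(B)|$, the inductive hypothesis yields that $\Tor_k^S(K,A') \to \Tor_k^S(K,B_1)$ and $\Tor_k^S(K,A'') \to \Tor_k^S(K,B_2)$ vanish, and feeding this --- together with the vanishing of the multiplication-by-$x_j$ maps used already --- into the mapping cone gives that $\Tor_k^S(K,A) \to \Tor_k^S(K,B)$ vanishes. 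The residual case $B = x_j B_1$ (i.e.\ $B_2 = 0$) is handled by peeling off powers of $x_j$ and is essentially a degree shift.

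The main obstacle is the claim itself, and two points within it. First, it genuinely fails for arbitrary monomial ideals $A \subseteq B$ with $\mathcal{G}(B) \cap A = \emptyset$ --- for instance $B = (x^2, y^3)$ and $A = \mm B$ in $K[x,y]$, where the induced map is nonzero in homological degree $1$ --- so vertex splittability must be used in an essential way; this is exactly why the theorem is about vertex splittings rather than about arbitrary variable partitions. Second, one must show that the decomposition $A = x_j A' + A''$ really inherits a vertex-splitting structure and that the reduction to $(A', B_1)$ and $(A'', B_2)$ strictly decreases the induction parameter; this compatibility between the recursive structures of $A$ and of $B$ (possibly after choosing the splitting vertex of $B$ judiciously, or reindexing the recursion) is the delicate part. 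Once the claim is in place, combining it with the two formal vanishing statements shows that all connecting maps in the long exact sequence of $0 \to xI_2 \to xI_1 \oplus I_2 \to I \to 0$ are surjective, the sequence splits, and the Betti splitting identities~\eqref{eq:BettiSplittingI=P+Q} follow.
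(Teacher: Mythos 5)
The statement is quoted from \cite{MKA16} and not reproved in the paper, so there is no in-paper proof to compare against; what follows compares your attempt against the known argument and assesses its correctness.

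Your framing is correct and standard: $xI_1\cap I_2 = xI_2$, the criterion of \cite{FHT2009} reduces Betti splitting to the vanishing of the two maps $\Tor_k^S(K,xI_2)\to\Tor_k^S(K,xI_2\oplus xI_1)$, the component into $\Tor_k^S(K,I_2)$ dies because the inclusion $xI_2\hookrightarrow I_2$ is multiplication by $x$ (hence $\mm$-torsion on $\Tor$), and via the degree-shift isomorphisms the remaining component becomes $\Tor_k^S(K,I_2)\to\Tor_k^S(K,I_1)$ induced by $I_2\hookrightarrow I_1$. The derivation that $\mathcal{G}(I_1)\cap I_2=\emptyset$ (so $I_2\subseteq\mm I_1$) is also correct.

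The inductive proof of your key claim, however, has a genuine gap, precisely at the point you flag as ``delicate.'' You assert that decomposing $A$ along the splitting vertex $x_j$ of $B$ yields a \emph{vertex splitting} $A=x_jA'+A''$ of $A$. This is false in general, even under all your hypotheses. Take $S=K[x_1,x_2,x_3]$, $B=(x_1,x_2)$ with vertex splitting $B=x_1S+(x_2)$, and $A=(x_2^2,x_2x_3,x_1x_3^2)$. Then $A\subseteq B$, $\mathcal{G}(B)\cap A=\emptyset$, and $A$ \emph{is} vertex splittable, but only with splitting vertex $x_2$: $A = x_2(x_2,x_3)+(x_1x_3^2)$. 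Along $x_1$ one gets either $A'=(x_3^2)$, and then $A''=(x_2^2,x_2x_3)\not\subseteq A'$; or $A'=(A:x_1)=(x_2,x_3)^2$, and then $\mathcal{G}(x_1A')\not\subseteq\mathcal{G}(A)$. So $x_1$ is not a splitting vertex of $A$, and the inductive structure you rely on is not available. (In this particular example $A'$ and $A''$ happen to be vertex splittable anyway, but you have no argument for that in general.)

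There is a second, independent gap in the ``feeding into the mapping cone'' step. Writing the ladder of short exact sequences
\[
\begin{array}{ccccccccc}
0 & \to & x_jA'' & \to & x_jA'\oplus A'' & \to & A & \to & 0\\
  &     & \downarrow &  & \downarrow       &     & \downarrow & & \\
0 & \to & x_jB_2 & \to & x_jB_1\oplus B_2 & \to & B & \to & 0
\end{array}
\]
and using naturality of the connecting maps, the inductive vanishing of the outer and middle vertical maps on $\Tor$ lets you conclude that the image of $\Tor_k(K,A)$ in $\Tor_k(K,B)$ lies inside the image of $\Tor_k(K,x_jB_1\oplus B_2)$ --- but not that it is zero, unless every class in $\Tor_k(K,A)$ lifts to $\Tor_k(K,x_jA'\oplus A'')$. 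That lifting holds exactly when the top row is a Betti splitting of $A$, which is what you would need to assume, a circularity. The ``residual case $B_2=0$'' also does not decrease $|\mathcal{G}(B)|$, so the induction parameter needs to be changed there.

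The proof in \cite{MKA16} avoids all of this by proving first (their Theorem 2.4) that vertex splittable ideals have \emph{linear quotients}, and, crucially, that an admissible linear-quotients order on $\mathcal{G}(I)$ can be chosen so that $\mathcal{G}(xI_1)$ precedes $\mathcal{G}(I_2)$. From that ordered linear-quotients structure the Betti splitting follows directly (the iterated mapping cone of the linear-quotients order is a \emph{minimal} free resolution that visibly splits along the partition, and one recovers exactly the Betti identity; this is also the content of \cite[Remark 2.10]{MKA16}, which the present paper uses later in Corollary \ref{Cor:CM-VS}). Your argument never invokes linear quotients, and it is precisely that structural handle that replaces the uncontrolled decomposition of $A$ along $x_j$. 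If you want to push your approach through, you should prove the vanishing of $\Tor_k(K,I_2)\to\Tor_k(K,I_1)$ by combining $I_2\subseteq\mm I_1$ with the componentwise linearity of $I_1$ (a consequence of linear quotients), rather than by a joint induction on $A$ and $B$.
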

	
	For recent applications of vertex splittings, see the papers~\cite{DRSVT23,M2023}.
	
	{\color{black} We close this subsection by introducing two families of monomial ideals: the $t$-spread strongly stable ideals and the (componentwise) polymatroidal ideals. We will show in \mbox{Section~\ref{secVS:3}} that they are families of vertex splittable ideals (see Propositions~\ref{prop:vec} and} \ref{Prop:CPVS}).
	
	A very meaningful class of monomial ideals of the polynomial ring $S$ is the class of \emph{strongly stable} monomial ideals. They are fundamental in commutative algebra, because~if $K$ has the characteristic zero, then they appear as generic initial ideals~\cite{HH2011}. In~\cite{F1}, the~concept of a strongly stable ideal was generalized to that of the ${\bf t}$-\emph{spread strongly stable} ideal.
	
	Let $d\ge2$, ${\bf t}=(t_1,\dots,t_{d-1})\in\ZZ^{d-1}_{\ge0}$ be a $(d-1)$~tuple, and~let $u=x_{i_1}\cdots x_{i_\ell}\in S$ be a monomial, with~$1\le i_1\le\dots\le i_\ell\le n$ and $\ell\le d$. We say that $u$ is ${\bf t}$\textit{-spread} if
	$$
	i_{j+1}-i_j\ge t_j\ \ \textup{for all}\ \ j=1,\dots,\ell-1.
	$$
	
	A monomial ideal $I\subset S$ is called ${\bf t}$\textit{-spread} if $\mathcal{G}(I)$ consists of ${\bf t}$-spread monomials. A~${\bf t}$-spread ideal $I\subset S$ is called \textit{${\bf t}$-spread strongly stable} if for all ${\bf t}$-spread monomials $u\in I$ and all $i<j$ such that $x_j$ divides $u$ and $x_i(u/x_j)$ is ${\bf t}$-spread, then $x_i(u/x_j)\in I$. \linebreak  For~${\bf t}=(0,\dots,0)$ and ${\bf t}=(1,\dots,1)$, we obtain the strongly stable and the squarefree strongly stable ideals~\cite{JT}.

	Another fundamental family of monomial ideals of $S$ is that of the so-called \emph{polymatroidal ideals}.
	
	Let $I\subset S$ be a monomial ideal generated in a single degree. We say that $I$ is \textit{polymatroidal} if the set of the exponent vectors of the minimal monomial generators of $I$ is the set of bases of a discrete polymatroid~\cite{JT}. 
	
	Polymatroidal ideals are characterized by the \textit{exchange property} \cite[Theorem 2.3]{JT}. For~a monomial $u\in S$, let
	$$
	\deg_{x_i}(u)\ =\ \max\{j\ :\ x_i^j\ \textup{divides}\ u\}.
	$$
	\vspace{-17pt}
	{\color{black} \begin{Lemma}  \label{lem:exchange} Let $I\subset S$ be a monomial ideal generated in a single degree. Then $I$ is polymatroidal if and only if 
			the following exchange property holds: for all $u,v\in\mathcal{G}(I)$ and all $i$ such that $\deg_{x_i}(u)>\deg_{x_i}(v)$, there exists $j$ with $\deg_{x_j}(u)<\deg_{x_j}(v)$ such that $x_j(u/x_i)\in\mathcal{G}(I)$.
	\end{Lemma}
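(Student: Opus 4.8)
The plan is to prove the equivalence between the polymatroidal property and the stated exchange property. Recall that by definition, $I$ is polymatroidal precisely when the set $B$ of exponent vectors of $\mathcal{G}(I)$ forms the bases of a discrete polymatroid, which—by the characterization of discrete polymatroids via their bases—amounts to the \emph{symmetric exchange property}: for all $\ab,\bb\in B$ and all $i$ with $\ab(i)>\bb(i)$, there exists $j$ with $\ab(j)<\bb(j)$ such that $\ab-\eb_i+\eb_j\in B$ (here $\eb_k$ denotes the $k$-th standard basis vector). Translating this back into monomials: writing $u=\xb^{\ab}$, $v=\xb^{\bb}$, the condition $\ab(i)>\bb(i)$ is exactly $\deg_{x_i}(u)>\deg_{x_i}(v)$, the condition $\ab(j)<\bb(j)$ is $\deg_{x_j}(u)<\deg_{x_j}(v)$, and the vector $\ab-\eb_i+\eb_j$ corresponds to the monomial $x_j(u/x_i)$, which is well-defined precisely because $\deg_{x_i}(u)>0$. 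Since $B$ is the base set of a polymatroid, all its members have the same modulus (equivalently, $I$ is generated in a single degree, which is our standing hypothesis), so $x_j(u/x_i)$ again has the right degree, and membership in $B$ translates to $x_j(u/x_i)\in\mathcal{G}(I)$. Thus the dictionary between exponent vectors and monomials converts the symmetric exchange property verbatim into the exchange property in the statement.

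Concretely, I would organize the proof in two directions. For the forward implication, assume $I$ is polymatroidal; then by the cited result \cite[Theorem 2.3]{JT} (or the standard theory of discrete polymatroids, e.g.\ \cite{HH2011}), $B=\{\ab : \xb^{\ab}\in\mathcal{G}(I)\}$ satisfies the exchange property for polymatroid bases, and the monomial translation above yields the claimed exchange property. For the converse, assume the monomial exchange property holds; since $I$ is generated in a single degree, all vectors in $B$ have the same coordinate sum, and the hypothesis says $B$ satisfies the symmetric exchange property for bases; by the characterization of discrete polymatroids this means $B$ is the base set of a discrete polymatroid, so $I$ is polymatroidal by definition.

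The only genuinely substantive point—and hence the main obstacle, though it is really a matter of citing the right fact rather than proving something hard—is to invoke correctly the equivalence between ``$B$ is the set of bases of a discrete polymatroid'' and ``$B$ consists of equicardinal vectors satisfying the (symmetric) exchange property.'' This is classical (see \cite{HH2011} or the original treatment), and in fact it is essentially the content of \cite[Theorem 2.3]{JT}, from which this lemma is extracted; so the proof reduces to making the monomial–vector dictionary explicit and checking that each clause of the exchange property matches. The remaining steps are purely notational bookkeeping: verifying that $\deg_{x_i}(\xb^{\ab})=\ab(i)$, that $x_j(u/x_i)=\xb^{\ab-\eb_i+\eb_j}$ is a legitimate monomial of the correct degree whenever $\deg_{x_i}(u)>0$, and that ``$\in\mathcal{G}(I)$'' corresponds to ``$\in B$.'' No delicate estimates or case analysis are needed beyond this.
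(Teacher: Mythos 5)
Your proposal is correct and matches what the paper does: the paper gives no independent proof of this lemma, stating it immediately after citing \cite[Theorem 2.3]{JT}, of which it is a direct monomial-language restatement. Your dictionary between exponent vectors and monomials, identifying $\deg_{x_i}(\xb^{\ab})$ with $\ab(i)$ and $x_j(u/x_i)$ with $\xb^{\ab-\eb_i+\eb_j}$, is exactly the unpacking that justifies the restatement, and you correctly isolate the only substantive ingredient as the classical characterization of discrete polymatroid bases via the exchange property.
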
}

	An arbitrary monomial ideal $I$ is called \textit{componentwise polymatroidal} if the component $I_{\langle j\rangle}$ is polymatroidal for all $j$. Here, for~a graded ideal $J\subset S$ and an integer $j$, we denote by $J_{\langle j\rangle}$ the graded ideal generated by all polynomials of degree $j$ belonging to $J$.
	
	Polymatroidal ideals are vertex splittable \cite[Lemma 2.1]{MNS22}. In~Proposition \ref{Prop:CPVS}, we prove the analogous case for componentwise polymatroidal~ideals.

	\section{A Cohen--Macaulay~Criterion}\label{secVS:2}
	In this section, we introduce a new criterion for the Cohen--Macaulayness of vertex splittable ideals. As~a {\color{black} result, we obtain} characterizations for Gorenstein, level and pseudo-Gorenstein vertex splittable~ideals. 
	
	The main result in the section is the~following.
	
	\begin{Theorem}\label{Thm:CM-VS}
		Let $I\subset S$ be a vertex splittable monomial ideal such that $I\subseteq\mm^2$, and let $x_i$ be a splitting vertex of $I$. Then, the~following conditions are~equivalent:
		\begin{enumerate}
			\item[\textup{(a)}] $I$ is CM.
			\item[\textup{(b)}] $(I:x_i),(I,x_i)$ are CM, and $\depth S/(I:x_i)=\depth S/(I,x_i)$.
		\end{enumerate}
	\end{Theorem}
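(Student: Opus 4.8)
The plan is to express both $\depth S/I$ and $\dim S/I$ through the corresponding invariants of $S/(I:x_i)$ and $S/(I,x_i)$, and then to notice that condition~(b) is precisely what forces these two numbers to agree. Write the vertex splitting as $I=x_iI_1+I_2$. A routine computation on monomial generators (using $I_2\subseteq I_1$ and that no generator of $I_2$ involves $x_i$) gives
\[
(I:x_i)=I_1 \qquad\text{and}\qquad (I,x_i)=(x_i)+I_2 .
\]
Since $I\subseteq\mm^2$ we have $I\subsetneq S$ and no variable belongs to $\mathcal G(I)$. As $I_2\subseteq I$, this gives $I_2\neq S$; and if $I_1=S$ then $x_i\in\mathcal G(x_iI_1)\subseteq\mathcal G(I)$, contradicting that $\mathcal G(I)$ contains no variable, so $I_1\neq S$ too. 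Hence neither $S/(I:x_i)$ nor $S/(I,x_i)$ is the zero module, a fact that will be used at the end.

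For the dimension, the standard short exact sequence
\[
0\longrightarrow \bigl(S/(I:x_i)\bigr)(-1)\xrightarrow{\ \cdot x_i\ } S/I\longrightarrow S/(I,x_i)\longrightarrow 0
\]
exhibits $\Supp(S/I)$ as the union of the supports of its outer terms, whence
\[
\dim S/I=\max\{\dim S/(I:x_i),\ \dim S/(I,x_i)\}.
\]
For the projective dimension I would use that $I=x_iI_1+I_2$ is a Betti splitting by Theorem~\ref{thm:MKA16}, with pieces $J=x_iI_1$ and $L=I_2$. A coprimality argument on monomial generators shows $J\cap L=x_iI_1\cap I_2=x_iI_2$. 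Plugging this into the projective-dimension formula~\eqref{eq:pdBettiSplit} for Betti splittings, and using that multiplication by $x_i$ merely shifts degrees (so $\pd(x_iM)=\pd M$ for any graded module $M$), one gets $\pd I=\max\{\pd I_1,\ \pd I_2+1\}$. Now $\pd S/I=\pd I+1$, $\pd S/(I:x_i)=\pd I_1+1$, and — since $x_i$ is a nonzerodivisor on $S/I_2$ — $\pd S/(I,x_i)=\pd S/I_2+1=\pd I_2+2$; therefore $\pd S/I=\max\{\pd S/(I:x_i),\ \pd S/(I,x_i)\}$, which by the Auslander--Buchsbaum formula~\eqref{eq:ABformula} translates to
\[
\depth S/I=\min\{\depth S/(I:x_i),\ \depth S/(I,x_i)\}.
\]

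The equivalence now follows by pure arithmetic. Write $\delta_1,\delta_2$ for the depths and $D_1,D_2$ for the dimensions of $S/(I:x_i)$ and $S/(I,x_i)$, so $\delta_k\le D_k$ for $k=1,2$. Then $S/I$ is CM if and only if $\min\{\delta_1,\delta_2\}=\depth S/I=\dim S/I=\max\{D_1,D_2\}$. The chain $\min\{\delta_1,\delta_2\}\le\delta_1\le D_1\le\max\{D_1,D_2\}$, together with its analogue for the index $2$, shows that equality of the two outer terms forces $\delta_1=D_1$, $\delta_2=D_2$ and $\delta_1=\delta_2$, which is exactly statement~(b); conversely, (b) gives $\min\{\delta_1,\delta_2\}=\delta_1=D_1=\max\{D_1,D_2\}$, so $S/I$ is CM. I expect the main obstacle to be the projective-dimension identity: correctly identifying $J\cap L=x_iI_2$, and tracking the degree shift and the extra ``$+1$'' coming from the nonzerodivisor $x_i$ modulo $I_2$; everything else is a generator computation or elementary $\min$/$\max$ bookkeeping.
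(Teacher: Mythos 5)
Your proof is correct and follows essentially the same strategy as the paper: you derive $\depth S/I=\min\{\depth S/(I:x_i),\depth S/(I,x_i)\}$ from the Betti splitting together with the identification $x_iI_1\cap I_2=x_iI_2$ and Auslander--Buchsbaum, derive $\dim S/I=\max\{\dim S/(I:x_i),\dim S/(I,x_i)\}$ from the short exact sequence, and then conclude by a $\min$/$\max$ sandwich argument. The only differences are cosmetic: you phrase the formulas directly in terms of $(I:x_i)$ and $(I,x_i)$ instead of $I_1$ and $I_2$, and you package the inequality chase a little more compactly.
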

	\begin{proof}
		We may assume $i=1$. Let $I=x_1I_1+I_2$ be the vertex splitting of $I$. Since \linebreak  $I=x_iI_1+I_2$ is a Betti splitting (Theorem \ref{thm:MKA16}),  
		then Formula (\ref{eq:pdBettiSplit}) 
		together with the Auslander--Buchsbaum Formula (\ref{eq:ABformula}), implies
		$$
		\depth S/I\ =\ \min\{\depth S/(x_1I_1),\,\depth S/I_2,\,\depth S/(x_1I_1\cap I_2)-1\}.
		$$
		
		Notice that $\depth S/x_1I_1=\depth S/I_1$ and $x_1I_1\cap I_2=x_1(I_1\cap I_2)=x_1I_2$, because~$I_2\subseteq I_1$ and $x_1$ does not divide any minimal monomial generator of $I_2$. Consequently, \linebreak  $\depth S/(x_1I_1\cap I_2)=\depth S/(x_1I_2)=\depth S/I_2$, and~so
		\begin{equation}\label{eq:depthBettiSplitVS}
			\depth S/I\ =\ \min\{\depth S/I_1,\,\depth S/I_2-1\}.
		\end{equation}
		
		We have the short exact sequence
		$$
		0\rightarrow S/(I:x_1)\rightarrow S/I\rightarrow S/(I,x_1)\rightarrow0.
		$$
		
		Notice that $(I:x_1)=(x_1I_1+I_2):x_1=(x_1I_1:x_1)+(I_2:x_1)=I_1+I_2=I_1$, because~$x_1$ does not divide any minimal monomial generator of $I_2$ and $I_2\subseteq I_1$. Since $I\subseteq\mm^2$, we have $x_1\notin I$. Thus, $I_1\ne S$.  Moreover, $(I,x_1)=(x_1I_1+I_2,x_1)=(I_2,x_1)$, and so 
		we obtain the short exact sequence
		$$
		0\rightarrow S/I_1\rightarrow S/I\rightarrow S/(I_2,x_1)\rightarrow0.
		$$
		Hence, $\dim S/I=\max\{\dim S/I_1,\dim S/(I_2,x_1)\}$. Since $S/(I_2,x_1)\cong K[x_2,\dots,x_n]/I_2$, we obtain that $\dim S/(I_2,x_1)=\dim K[x_2,\dots,x_n]/I_2=\dim S/I_2-1$. Hence,
		\begin{equation}\label{eq:dimBettiSplitVS}
			\dim S/I\ =\ \max\{\dim S/I_1,\,\dim S/I_2-1\}.
		\end{equation}
		
		(a)$\Rightarrow$(b) Suppose that $I$ is CM. By~Equations~(\ref{eq:depthBettiSplitVS}) and (\ref{eq:dimBettiSplitVS}), we have
		$$
		\dim S/I\ \ge\ \dim S/I_1\ \ge\ \depth S/I_1\ \ge\ \depth S/I\ =\ \dim S/I,
		$$
		and
		$$
		\dim S/I\ \ge\ \dim S/I_2-1\ \ge\ \depth S/I_2-1\ \ge\ \depth S/I\ =\ \dim S/I.
		$$
		
		Hence, $S/I_1$, $S/I_2$ are CM and $\depth S/I_1=\depth S/I=\depth S/I_2-1$.\medskip
		
		(b)$\Rightarrow$(a) Conversely, assume that $S/I_1$ and $S/I_2$ are CM and that \linebreak  $\depth S/I_1=\depth S/I_2-1$. Then,
		$$
		\min\{\depth S/I_1,\,\depth S/I_2-1\}\ =\ \depth S/I_1,
		$$
		and
		$$
		\max\{\dim S/I_1,\,\dim S/I_2-1\}\ =\ \dim S/I_1.
		$$
		
		Equations~(\ref{eq:depthBettiSplitVS}) and (\ref{eq:dimBettiSplitVS}) imply that $\depth S/I=\depth S/I_1=\dim S/I_1=\dim S/I$ and so $S/I$ is CM.
	\end{proof}

	The next important vanishing theorem due to Grothendieck \cite[Theorem 3.5.7]{BH} will be crucial to characterize Gorenstein, level and pseudo-Gorenstein vertex splittable ideals. If~$(R, \mm, k)$ is a Noetherian local ring and $M$ a finitely generated $R$-module, we denote by $H^i_\mm(M)$ the $i$th local cohomology module of $M$ with support on $\mm$ \cite{BH}.

	\begin{Theorem} \label{thm:Gro} Let $(R, \mm, k)$ be a Noetherian local ring and $M$ a finitely generated $R$-module of depth $t$ and dimension $d$.~Then
		\begin{enumerate}
			\item[\textup{(a)}] $H^i_\mm(M)=0$ 
			for~$i<t$ and $i>d$.
			\item[\textup{(b)}] $H^t_\mm(M)\neq 0$ and $H^d_\mm(M)\neq 0$.
		\end{enumerate}  
	\end{Theorem}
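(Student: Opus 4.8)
This is the classical Grothendieck (non-)vanishing theorem, so the plan is to recall its standard proof; we assume $M\neq 0$. Recall that $H^i_\mm(M)\cong\varinjlim_k\Ext^i_R(R/\mm^k,M)$, so every $H^i_\mm(M)$ is an $\mm$-torsion module and vanishes for $i<0$. I would first settle the assertions involving $t=\depth M$ by induction on $t$. If $t=0$, then $\mm\in\Ass M$, so $0\neq\Hom_R(k,M)\subseteq\Gamma_\mm(M)=H^0_\mm(M)$, which is (b) at $i=0$, while (a) for $i<0$ is clear. If $t\geq 1$, pick an $M$-regular element $x\in\mm$, so that $\depth(M/xM)=t-1$, and apply $H^\bullet_\mm$ to $0\to M\xrightarrow{\,x\,}M\to M/xM\to 0$. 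For $i<t$ the inductive hypothesis gives $H^{i-1}_\mm(M/xM)=0$, hence multiplication by $x$ is injective on $H^i_\mm(M)$; since this module is $\mm$-torsion and $x\in\mm$, it must vanish. Consequently $H^{t-1}_\mm(M)=0$, and the same long exact sequence yields $0\neq H^{t-1}_\mm(M/xM)\hookrightarrow H^t_\mm(M)$, which is (b) at $i=t$.

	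For the vanishing in (a) when $i>d=\dim M$, I would pass to $\bar R=R/\Ann_R M$, which has dimension $d$, choose a system of parameters $y_1,\dots,y_d$ of $\bar R$, so that $\sqrt{(y_1,\dots,y_d)\bar R}=\mm\bar R$, and invoke the independence of local cohomology of the ambient ring and of the defining ideal up to radical: then $H^i_\mm(M)$ is the $i$th cohomology of the Cech complex on $y_1,\dots,y_d$, which has length $d$, so it vanishes for $i>d$.

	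The remaining non-vanishing $H^d_\mm(M)\neq 0$ is the deepest point. Since completion preserves $\depth$, $\dim$ and the (non-)vanishing of local cohomology, I may assume $R$ complete, and argue by induction on the positive integer $e=\sum_\p\ell_{R_\p}(M_\p)$, the sum being over the minimal primes $\p$ of $\Supp M$ with $\dim R/\p=d$. Replacing $M$ by $M/N$, where $N$ is the largest submodule of $M$ of dimension $<d$ (this changes neither $H^d_\mm$, since $H^d_\mm(N)=H^{d+1}_\mm(N)=0$ by the vanishing just proved, nor $e$), I may assume $M$ has no nonzero submodule of dimension $<d$, whence $M$ embeds into $\bigoplus_\p M_\p$. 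Fixing one such $\p$: as $(0:_M\p)$ has positive rank over the domain $R/\p$, it contains a copy of $R/\p$, giving $0\to R/\p\to M\to C\to 0$ with $\dim C\leq d$ and $e(C)=e-1$; since $H^{d+1}_\mm(R/\p)=0$, this sequence shows $H^d_\mm(M)$ surjects onto $H^d_\mm(C)$, which is nonzero by induction whenever $e\geq 2$. When $e=1$, $M$ embeds into the fraction field of $R/\p$, hence $M\cong\mathfrak b$ for a nonzero ideal $\mathfrak b$ of the complete local domain $R/\p$; as $\dim (R/\p)/\mathfrak b<d$, the sequence $0\to\mathfrak b\to R/\p\to (R/\p)/\mathfrak b\to 0$ shows $H^d_\mm(\mathfrak b)$ surjects onto $H^d_\mm(R/\p)$, so it is enough to see $H^d_\mm(R/\p)\neq 0$. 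By Cohen's structure theorem $R/\p$ is module-finite over a complete regular local ring $(A,\nn)$ with $\dim A=d$; since $A$ is Cohen--Macaulay the depth bound gives $H^d_\nn(A)\neq 0$, indeed $H^d_\nn(A)^\vee\cong A$, and because the top local cohomology functor over $A$ is right exact one gets $H^d_\mm(R/\p)\cong H^d_\nn(A)\otimes_A(R/\p)$, whose Matlis dual is $\Hom_A(R/\p,A)$; this is nonzero because $R/\p$ is a torsion-free $A$-module of positive rank and therefore carries a nonzero $A$-linear functional.

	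The routine parts are the two inductions and the standard functoriality of local cohomology (long exact sequences, $\mm$-torsion, flat base change under completion, independence of the ideal up to radical). The main obstacle is organizing the non-vanishing $H^d_\mm(M)\neq 0$: one has to run the d\'evissage so that the connecting homomorphisms cause no interference---which is why $R/\p$ is split off as a \emph{submodule} and the already-established top vanishing is used to produce a surjection onto $H^d_\mm(C)$---and one has to transfer non-vanishing across the finite extension $A\hookrightarrow R/\p$, where a naive appeal to Nakayama is unavailable since the Artinian modules $H^i_\mm(M)$ need not be finitely generated.
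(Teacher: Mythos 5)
The paper does not prove this theorem; it is quoted verbatim as Theorem 3.5.7 of Bruns--Herzog and used as a black box in the proof of Corollary \ref{Cor:CM-VS}(e), so there is no in-paper argument to compare against. Your sketch is a correct reconstruction of the standard textbook proof (the one in Bruns--Herzog and Brodmann--Sharp): the torsion-plus-regular-element induction for the depth bound, the \v Cech complex on a system of parameters of $R/\Ann M$ for the dimension bound, and the d\'evissage to a complete local domain followed by Cohen's structure theorem and Matlis duality over the regular Noether normalization for the top non-vanishing. The small points you leave implicit all check out: the reduction $M\mapsto M/N$ is legitimate because the already-proved vanishing kills $H^d_\mm(N)$ and $H^{d+1}_\mm(N)$; after that reduction every associated prime of $M$ has dimension $d$, so the embedding into $\bigoplus_\p M_\p$ and, in the $e=1$ case, the identification of $M$ with a nonzero fractional (hence, after clearing denominators, integral) ideal of the domain $R/\p$ both go through; and the identification $H^d_\nn(-)\cong H^d_\nn(A)\otimes_A(-)$ is exactly Watts' theorem applied to the right-exact, coproduct-preserving functor $H^d_\nn$. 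For a paper that merely cites the result this is of course more than is needed, but as a free-standing argument it is sound.
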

	
	\begin{Corollary}\label{Cor:CM-VS}
		Let $I\subset S$ be a vertex splittable CM ideal such that $I\subseteq\mm^2$, and let $I=x_iI_1+I_2$ be a vertex splitting of $I$. Then, the~following statements~hold:
		\begin{enumerate}
			\item[\textup{(a)}] $\textup{CM-type}(S/I)=\textup{CM-type}(S/I_1)+\textup{CM-type}(S/I_2)$.
			\item[\textup{(b)}] $S/I$ is Gorenstein if and only if $I$ is a principal ideal.
			\item[\textup{(c)}] $S/I$ is level if and only if $S/I_1$ and $S/I_2$ are level and $\reg S/I_1+1=\reg S/I_2$.
			\item[\textup{(d)}] $S/I$ is pseudo-Gorenstein if and only~if one of the following occurs: 
			Either $S/I_1$ is pseudo-Gorenstein and $\reg S/I_1+1>\reg S/I_2$ or $S/I_2$ is pseudo-Gorenstein and $\reg S/I_1+1<\reg S/I_2$.
			
			\item[\textup{(e)}] $H^{\dim S/I}_\mm(S/I)/H^{\dim S/I}_\mm(S/(I:x_i))\cong H^{\dim S/I}_\mm(S/(I,x_i))$.
		\end{enumerate}
		
	\end{Corollary}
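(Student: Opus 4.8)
The plan rests on channelling the Cohen--Macaulay hypothesis through Theorem~\ref{Thm:CM-VS} and its proof, and then reading everything off the Betti splitting $I=x_iI_1+I_2$ guaranteed by Theorem~\ref{thm:MKA16}. As in the proof of Theorem~\ref{Thm:CM-VS} we may take $i=1$, so that $(I:x_1)=I_1$, $(I,x_1)=(I_2,x_1)$, $x_1I_1\cap I_2=x_1I_2$, and $x_1I_1\cong I_1(-1)$, $x_1I_2\cong I_2(-1)$ as graded $S$-modules. Since $S/I$ is Cohen--Macaulay, Theorem~\ref{Thm:CM-VS} together with the depth and dimension computations in its proof gives that $S/I_1$, $S/I_2$ and $S/(I,x_1)$ are Cohen--Macaulay, that $\dim S/I_1=\dim S/(I,x_1)=\dim S/I=:d$, and that $\depth S/I_2=\dim S/I_2=d+1$. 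Setting $p:=\pd S/I=n-d$, the Auslander--Buchsbaum formula \eqref{eq:ABformula} yields $\pd S/I_1=p$ and $\pd S/I_2=p-1$; note also that $I_2=0$ would force $\depth S/I_2=n$, hence $d=n-1$ and $\pd S/I=1$, i.e.\ $I$ principal, so in the non-principal case $I_2\ne 0$.

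Part (e) is the quickest. I would apply $H^{\bullet}_{\mm}(-)$ to the short exact sequence $0\to S/(I:x_1)\to S/I\to S/(I,x_1)\to 0$. Each of the three modules is Cohen--Macaulay of dimension $d$, so by Grothendieck's vanishing theorem (Theorem~\ref{thm:Gro}) all of their local cohomology is concentrated in homological degree $d$; the long exact sequence therefore collapses to the short exact sequence $0\to H^{d}_{\mm}(S/(I:x_1))\to H^{d}_{\mm}(S/I)\to H^{d}_{\mm}(S/(I,x_1))\to 0$, which is exactly the assertion of (e).

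For (a)--(d) I would transport the Betti splitting identity \eqref{eq:BettiSplittingI=P+Q} to $S/I$; using the degree shifts above and $\beta_{k,\ell}(S/M)=\beta_{k-1,\ell}(M)$ it reads
\[
\beta_{k,\ell}(S/I)=\beta_{k,\ell-1}(S/I_1)+\beta_{k,\ell}(S/I_2)+\beta_{k-1,\ell-1}(S/I_2)\qquad(k\ge 2).
\]
Because $\pd S/I_2=p-1<p$, the middle term vanishes at $k=p$, so $\beta_{p,\ell}(S/I)=\beta_{p,\ell-1}(S/I_1)+\beta_{p-1,\ell-1}(S/I_2)$. Summing over $\ell$ gives $\textup{CM-type}(S/I)=\textup{CM-type}(S/I_1)+\textup{CM-type}(S/I_2)$, which is (a) (the degenerate case $I_2=0$, where $I$ is principal, being immediate). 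Part (b) then follows: the type of a nonzero Cohen--Macaulay ring is at least $1$, so by (a), if $I_2\ne 0$ then $\textup{CM-type}(S/I)\ge 2$ and $S/I$ is not Gorenstein; hence $S/I$ Gorenstein forces $I_2=0$, i.e.\ $I$ principal, and conversely a principal ideal contained in $\mm^2$ defines a hypersurface ring, which is Gorenstein.

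Finally, for (c) and (d) I would extract graded information from $\beta_{p,\ell}(S/I)=\beta_{p,\ell-1}(S/I_1)+\beta_{p-1,\ell-1}(S/I_2)$. By \cite[Corollary 2.17]{HHOBook} the largest $\ell$ with $\beta_{p,\ell}(S/I)\ne 0$ equals $p+\reg S/I$, and likewise the largest $\ell$ with $\beta_{p,\ell}(S/I_1)\ne 0$ is $p+\reg S/I_1$ and that with $\beta_{p-1,\ell}(S/I_2)\ne 0$ is $(p-1)+\reg S/I_2$; moreover $\reg S/I=\max\{\reg S/I_1+1,\ \reg S/I_2\}$ (a Betti-splitting consequence; cf.\ \cite[Corollary 2.2]{FHT2009}). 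Since the two summands above are nonnegative, $S/I$ is level iff each of $\ell\mapsto\beta_{p,\ell-1}(S/I_1)$ and $\ell\mapsto\beta_{p-1,\ell-1}(S/I_2)$ is supported in the single value $\ell-1=p+\reg S/I-1$; by the previous sentence this forces $S/I_1$ and $S/I_2$ to be level with $p+\reg S/I_1=(p-1)+\reg S/I_2=p+\reg S/I-1$, equivalently $\reg S/I_1+1=\reg S/I_2$, and the converse is immediate, giving (c). For (d) I would split on the sign of $\reg S/I_1+1-\reg S/I_2$: when it is positive, at $\ell=p+\reg S/I\,(=p+\reg S/I_1+1)$ the term $\beta_{p-1,\ell-1}(S/I_2)$ vanishes and $\beta_{p,p+\reg S/I}(S/I)=\beta_{p,p+\reg S/I_1}(S/I_1)$, so $S/I$ is pseudo-Gorenstein iff $S/I_1$ is; when it is negative the symmetric statement holds with $S/I_2$; when it is zero both extremal Betti numbers contribute, so $\beta_{p,p+\reg S/I}(S/I)\ge 2$ and $S/I$ is not pseudo-Gorenstein. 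This matches the dichotomy in (d). The only delicate step in the whole argument is this last piece of degree bookkeeping in (c) and (d) — checking that the extremal Betti numbers sit in exactly the claimed internal degrees and that no summand is miscounted; once \eqref{eq:BettiSplittingI=P+Q} and the $\pd$/$\reg$ data are in hand, the rest is direct substitution.
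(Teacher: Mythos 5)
Your proposal is correct and mirrors the paper's argument almost step by step: the paper invokes Theorem~\ref{Thm:CM-VS} to get $S/I_1$, $S/I_2$ Cohen--Macaulay with $\pd S/I_1=\pd S/I_2+1=p$, then reads parts (a)--(d) off the collapsed Betti-number identity $\beta_{p,p+j}(S/I)=\beta_{p,p+j-1}(S/I_1)+\beta_{p-1,p-1+j}(S/I_2)$ (quoted from~\cite[Remark 2.10]{MKA16}, which you re-derive directly from the Betti splitting and the degree shift $x_iI_k\cong I_k(-1)$), and proves (e) exactly as you do, via the long exact sequence in local cohomology together with Theorem~\ref{thm:Gro}. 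The only cosmetic differences are that you prove (b) by observing that $I_2\ne0$ forces $\textup{CM-type}(S/I)\ge2$ whereas the paper instead goes through the depth formula~\eqref{eq:depthBettiSplitVS} to get $\pd S/I=1$, and you lay out the three-way case split in (d) a bit more explicitly; both choices are equivalent.
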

	\begin{proof}
		We may assume that $x_i=x_1$. Since $S/I$ is CM, Theorem \ref{Thm:CM-VS} guarantees that $S/I_1,S/I_2$ are CM and $\depth S/I_1=\depth S/I_2-1$. Hence, $\pd S/I_1=\pd S/I_2+1$. Let $p=\pd S/I_1$. In~particular, we have $p=\pd S/I$. Now, by~\cite[Remark 2.10]{MKA16}, we have for all $j$
		$$
		\beta_{p,p+j}(S/I)\ =\ \beta_{p,p+j-1}(S/I_1)+\beta_{p,p+j}(S/I_2)+\beta_{p-1,p-1+j}(S/I_2).
		$$
		
		Since $\pd S/I_2=p-1$, 
		the above formula simplifies to
		\begin{equation}\label{eq:BettiVS}
			\beta_{p,p+j}(S/I)\ =\ \beta_{p,p+j-1}(S/I_1)+\beta_{p-1,p-1+j}(S/I_2).
		\end{equation}
		
		From this formula, we deduce that
		$$
		\reg S/I\ =\ \max\{\reg S/I_1+1\,\ \reg S/I_2\}.
		$$
		
		We obtain the following:  
		
		(a) The assertion follows immediately from (\ref{eq:BettiVS}).
		
		(b) In the proof of Theorem \ref{Thm:CM-VS} we noted, that $I_1\ne 0,S$. Thus, $\textup{CM-type}(S/I_1)\ge1$. By~(a), it follows that $I$ is Gorenstein if and only if 
		$I_1$ is Gorenstein, and~        $I_2=0$. 
		Using Formula~(\ref{eq:depthBettiSplitVS}) and Theorem~\ref{Thm:CM-VS} (b),
		we obtain $\depth S/I=n-1$. Since $\depth S/I=n - \pd S/I$, we have $\pd S/I = 1$, equivalent to saying that  $I$ is a principal ideal.   
		
		(c) Assume that $S/I$ is level. Then $\beta_{p,p+j}(S/I)\ne0$ only for $j=\reg S/I$. Since $\reg S/I=\max\{\reg S/I_1+1,\reg S/I_2\}$ and $\beta_{p,p+\reg S/I_1}(S/I_1),\beta_{p-1,p-1+\reg S/I_2}(S/I_2)$ are both nonzero, we deduce from Formula~(\ref{eq:BettiVS}) that $\reg S/I_1+1=\reg S/I_2$ and that $S/I_1$, $S/I_2$ are level. Conversely, if~$\reg S/I_1+1=\reg S/I_2$ and $S/I_1$, $S/I_2$ are level, we deduce from Formula (\ref{eq:BettiVS}) that $S/I$ is level.
		
		(d) Assume that $S/I$ is pseudo-Gorenstein. Then $\beta_{p,p+\reg S/I}(S/I)=1$. Since $\reg S/I=\max\{\reg S/I_1+1,\reg S/I_2\}$ and $\beta_{p,p+\reg S/I_1}(S/I_1),\beta_{p-1,p-1+\reg S/I_2}(S/I_2)$ are both nonzero, we deduce from Formula (\ref{eq:BettiVS}) that either $S/I_1$ is pseudo-Gorenstein and $\reg S/I_1+1>\reg S/I_2$ or~$S/I_2$ is pseudo-Gorenstein and $\reg S/I_1+1<\reg S/I_2$. The~converse can be proved in a similar way.
		
		(e) Since $S/I$ is CM, Theorem \ref{Thm:CM-VS} implies that $S/(I:x_i)$ and $S/(I,x_i)$ are CM and $\dim S/(I:x_i)=\dim S/(I,x_i)=\dim S/I$. As~shown in the proof of Theorem \ref{Thm:CM-VS}, we have the short exact sequence
		$$
		0\rightarrow S/(I:x_i)\rightarrow S/I\rightarrow S/(I,x_i)\rightarrow0.
		$$
		
		This sequence induces the long exact sequence of local cohomology modules:
		$$
		\cdots\rightarrow H^{i-1}_\mm(S/(I,x_i))\rightarrow H^{i}_\mm(S/(I:x_i))\rightarrow H^{i}_\mm(S/I)\rightarrow H^{i}_\mm(S/(I,x_i))\rightarrow\cdots.
		$$
		
		Let $M$ be a finitely generated CM $S$-module. By~Theorem \ref{thm:Gro},  
		$H_\mm^i(M)\ne0$ if and only if $i=\depth M=\dim M$. Thus, the~above exact sequence simplifies to
		$$
		0\rightarrow H^{\dim S/I}_\mm(S/(I:x_i))\rightarrow H^{\dim S/I}_\mm(S/I)\rightarrow H^{\dim S/I}_\mm(S/(I,x_i))\rightarrow0,
		$$
		and the assertion follows.
	\end{proof}

	\begin{Remark}\rm It is clear that any ideal $I\subset S$ generated by a subset of the variables of $S$ is Gorenstein and vertex splittable. Hence, Corollary \ref{Cor:CM-VS} implies immediately that the only Gorenstein vertex splittable ideals of $S$ are the principal monomial ideals and the ideals generated by a subset of the variables.
	\end{Remark}

	\section{Families of Cohen--Macaulay Vertex Splittable~Ideals}\label{secVS:3}
	
	In this section, by~using Theorem \ref{Thm:CM-VS}, we recover in a simple and very effective manner {\color{black} Cohen--Macaulay criteria for several} families of monomial ideals. We use the fact that if $I=x_iI_1+I_2$ is a vertex splitting, then $I_1,I_2$ are vertex splittable ideals that, in~good cases, belong again to a given family of vertex splittable monomial ideals and~to which one may apply inductive~arguments.
	
	{\color{black} The first two families were introduced in Section~\ref{secVS:1}.}
	\subsection{(Vector-Spread) Strongly Stable Ideals} 
	
	In \cite[Theorem 4.3]{CF}, we classified the CM ${\bf t}$-spread strongly stable ideals. Here, we recover this result using Theorem \ref{Thm:CM-VS}.
	\begin{Proposition}\label{prop:vec}
		Let $I\subset S$ be a ${\bf t}$-spread strongly stable ideal such that $I\subseteq\mm^2$. Then {\color{black}
			\begin{enumerate}
				\item[\em(a)] $I$ is vertex splittable;
				\item[\em(b)] $I$ is CM if and only if there exists $\ell\le d$ such that
				$$
				x_{n-(t_1+t_2\dots+t_{\ell-1})}x_{n-(t_2+t_3\dots+t_{\ell-1})}\cdots x_{n-t_{\ell-1}}x_n\in\mathcal{G}(I).
				$$
		\end{enumerate}}
	\end{Proposition}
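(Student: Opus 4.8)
The plan is to prove (a) and (b) together by induction on $n$, using the vertex splitting of $I$ at the variable $x_1$; after the harmless normalization that every variable of $S$ divides some minimal generator of $I$, two preliminary facts drive the argument. First, $I$ has no minimal generator of degree $1$: if $x_j\in\mathcal G(I)$, then $x_1=x_1(x_j/x_j)\in I$ by strong stability, contradicting $I\subseteq\mm^2$. Second, if $I\neq 0$ then some minimal generator of $I$ is divisible by $x_1$; otherwise, taking any $u\in\mathcal G(I)$ with least variable $x_{j_1}$ (so $j_1\ge 2$), the monomial $x_1(u/x_{j_1})$ is again ${\bf t}$-spread --- its first gap is $\ge(j_2-j_1)+1\ge t_1+1$ and the remaining gaps are those of $u$ --- hence lies in $I$ by strong stability, so is divisible by some $v\in\mathcal G(I)$; as $v$ avoids $x_1$ by assumption, $v\mid u/x_{j_1}\mid u$, forcing $v=u$, which is absurd. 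Now write $I=x_1I_1+I_2$ with $I_1=(I:x_1)$ and $I_2$ generated by $\mathcal G(I)\setminus x_1S$. The one nonobvious point in verifying this is a vertex splitting is the disjointness $\mathcal G(x_1I_1)\cap\mathcal G(I_2)=\emptyset$: if $w=x_{j_1}\cdots x_{j_m}\in\mathcal G(I_2)$ with $j_1\ge 2$, the same computation gives $x_1(w/x_{j_1})\in I$, so $w/x_{j_1}\in I_1$ and $w$ is not a minimal generator of $I_1$. Since $I_2$ is again ${\bf t}$-spread strongly stable in $K[x_2,\dots,x_n]$, contained in the square of its graded maximal ideal, it is vertex splittable by induction; for $I_1=(I:x_1)$, whose minimal generators are among $\{u/x_1:x_1\mid u\in\mathcal G(I)\}\cup\mathcal G(I_2)$, one runs the same recursion, with principal ideals and ideals generated by subsets of the variables as base cases. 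This proves (a).

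For (b) I would apply Theorem~\ref{Thm:CM-VS} with splitting vertex $x_1$: $I$ is CM if and only if $I_1=(I:x_1)$ and $(I,x_1)=(I_2,x_1)$ are CM and $\depth S/I_1=\depth S/(I,x_1)$. As $S/(I,x_1)\cong K[x_2,\dots,x_n]/I_2$ with $I_2$ a ${\bf t}$-spread strongly stable ideal inside $(x_2,\dots,x_n)^2$, the inductive hypothesis describes its CM-ness via the staircase condition for $I_2$ and at the same time yields $\depth K[x_2,\dots,x_n]/I_2$; an analogous analysis of $I_1$, using the description of $\mathcal G(I_1)$ above, determines its CM-ness and depth. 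Substituting these into the identities $\depth S/I=\min\{\depth S/I_1,\depth S/I_2-1\}$ and $\dim S/I=\max\{\dim S/I_1,\dim S/I_2-1\}$ from the proof of Theorem~\ref{Thm:CM-VS}, the equivalence in (b) reduces to a purely combinatorial assertion: the two pieces are simultaneously CM with equal depth precisely when some $u^{(\ell)}:=x_{n-(t_1+\dots+t_{\ell-1})}x_{n-(t_2+\dots+t_{\ell-1})}\cdots x_{n-t_{\ell-1}}x_n$ with $\ell\le d$ lies in $\mathcal G(I)$. The heart of the matter is that, by strong stability, $u^{(\ell)}\in\mathcal G(I)$ forces $I$ to contain every ${\bf t}$-spread monomial of degree $\ell$ (being the ``rightmost'' such monomial, $u^{(\ell)}$ shifts down to all of them), and this is exactly the condition that makes $\depth S/I_1$ and $\depth S/I_2-1$ coincide; conversely, the equality of these two depths together with the CM-ness of both pieces forces one of the $u^{(\ell)}$ to be a minimal generator of $I$.

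The step I expect to be the main obstacle is controlling $I_1=(I:x_1)$. Its generators $u/x_1$ (for $x_1\mid u\in\mathcal G(I)$) lie in $K[x_{1+t_1},\dots,x_n]$ and obey only the truncated spread bounds $(t_2,\dots,t_{d-1})$, while $I_1$ also keeps the generators in $\mathcal G(I_2)$, which carry the full ${\bf t}$ and may have larger degree; thus $I_1$ need not itself be ${\bf t}$-spread strongly stable, and the naive induction on the class of ${\bf t}$-spread strongly stable ideals does not literally close. The actual work is to push the argument through a suitable enlargement of this class --- or a double induction on $(n,d)$ --- that is stable under $I\mapsto(I:x_1)$ and $I\mapsto I_2$ and along which both the vertex-splittable property and the depth/staircase accounting propagate; one must also handle separately the degenerate situations in which $I_1$ acquires linear generators (so that Theorem~\ref{Thm:CM-VS} does not apply to it directly) or in which $I_2=0$, and re-normalize the variable set at each step. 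Granting that bookkeeping, verifying the vertex splitting, extracting the depths, and comparing with the Krull dimension are all routine computations with the Eliahou--Kervaire-type resolution of ${\bf t}$-spread strongly stable ideals.
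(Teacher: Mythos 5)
Your proof of part (a) is essentially on the right track, but the ``main obstacle'' you flag at the end is not actually an obstacle, and the paper dispatches it with the same double induction you only tentatively propose. The point you miss is that once $I_2\subseteq I_1$ is established (which your computation with $x_1(w/x_{j_1})$ does), the colon ideal $(I:x_1)$ is generated \emph{exactly} by $\{u/x_1:\,x_1\mid u\in\mathcal G(I)\}$ --- these are its minimal generators, with no contribution from $\mathcal G(I_2)$, since $I_2\subseteq(u/x_1:\,x_1\mid u\in\mathcal G(I))$. So your description of $\mathcal G(I_1)$ as ``among $\{u/x_1\}\cup\mathcal G(I_2)$'' is wrong in a way that obscures the structure: $I_1$ \emph{is} a $(t_2,\dots,t_{d-1})$-spread strongly stable ideal, and the class of vector-spread strongly stable ideals (over varying spread vectors) is closed under both $I\mapsto(I:x_1)$ and $I\mapsto I_2$, so the induction on $(n,d)$ closes without any enlargement of the class. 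The paper sets $I_1$ directly as the ideal generated by the shifted generators, notes $I_1$ is $(t_2,\dots,t_{d-1})$-spread strongly stable, $I_2$ is ${\bf t}$-spread strongly stable in $K[x_2,\dots,x_n]$, and runs the double induction; no bookkeeping beyond that is needed.

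For part (b) your outline is much more elaborate than the paper's and, more importantly, never actually closes. You reduce to a combinatorial assertion (``$u^{(\ell)}\in\mathcal G(I)$ forces $\depth S/I_1$ and $\depth S/I_2-1$ to coincide'') but neither prove it nor specify what the inductive depth formulas are. The paper's proof of the forward direction is drastically shorter: if $I$ is CM and non-principal, Theorem~\ref{Thm:CM-VS} forces $I_2$ to be CM with $\pd S/I_2=\pd S/I-1>0$ (the paper has a sign typo here), hence $I_2\ne0$; by the inductive hypothesis applied to $I_2\subset K[x_2,\dots,x_n]$, some $\ell\le d$ gives $x_{n-(t_1+\cdots+t_{\ell-1})}\cdots x_n\in\mathcal G(I_2)\subset\mathcal G(I)$, done. (The principal case is a one-line verification.) There is no need to compute $\depth S/I_1$ at all. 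If you want to complete your own route you would have to prove, from the inductive description, the exact depth of the pieces and compare them --- a genuine extra argument that the paper's approach avoids entirely by pushing the staircase monomial down into $I_2$.
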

	\begin{proof}
		
		(a) We proceed by double induction on the number of variables $n$ and the highest degree $d$ of a generator $u\in\mathcal{G}(I)$. If~$n=1$, then $I$ is a principal ideal whether or not the integer $d$ is, and~so it is vertex splittable. Suppose $n>1$. If~$d=1$, then $I$ is an ideal generated by a subset of the variables and it is clearly vertex splittable. Suppose $d>1$. We can write $I=x_1I_1+I_2$, where $\mathcal{G}(I_1)=\{u/x_1:u\in\mathcal{G}(I),\ x_1\ \textup{divides}\ u\}$ and $\mathcal{G}(I_2)=\mathcal{G}(I)\setminus\mathcal{G}(x_1I_1)$. It is immediately clear that $I_1\subset S$ is $(t_2,\dots,t_{d-1})$-spread strongly stable and that $I_2$ is a ${\bf t}$-spread strongly stable ideal of $K[x_2,\dots,x_n]$. By~induction on $n$ and $d$,  we have that $I_1$ and $I_2$ are vertex splittable. Hence, so is $I$.
		
		(b) We may suppose that $x_n$ divides some minimal generator of $I$. Otherwise, we can consider $I$ as a monomial ideal of a smaller polynomial ring. If~$I$ is principal, then we have $I=(u)=(x_1x_{1+t_1}\cdots x_{1+t_1+\dots+t_{\ell-1}})$, with~$n=1+t_1+\dots+t_{\ell-1}$, and~$\ell\le d$. Otherwise, if~$I$ is not principal, then $\pd S/I>1$, and we can write $I=x_1I_1+I_2$ as above. By~Theorem~\ref{Thm:CM-VS}, $I_2$ is CM and $\pd S/I_2=\pd S/I+1>1$. Thus, $I_2\ne0$. Hence, by~induction, there exists $\ell\le d$ such that $$x_{n-(t_1+\dots+t_{\ell-1})}\cdots x_{n-t_{\ell-1}}x_n\in\mathcal{G}(I_2).$$ 
		
		Since $\mathcal{G}(I_2)\subset\mathcal{G}(I)$, the~assertion follows.
	\end{proof}
	\subsection{Componentwise Polymatroidal~Ideals}\label{sub4.2}
	
	In this subsection, we prove that componentwise polymatroidal ideals are also vertex~splittable. 
	
	A longstanding conjecture of Bandari and Herzog predicted that componentwise polymatroidal ideals have linear quotients~\cite{BH2013}. This conjecture was solved recently in~\cite[Theorem 3.1]{F2}. Inspecting the proof of this  theorem, we obtain the following:
	
	\begin{Proposition}\label{Prop:CPVS}
		Componentwise polymatroidal ideals are vertex splittable.
	\end{Proposition}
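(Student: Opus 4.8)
The plan is to argue by induction on the pair $(n,\delta(I))$ ordered lexicographically, where $n$ is the number of variables of $S$ and $\delta(I)=\sum_{u\in\mathcal{G}(I)}\deg(u)$. If $I$ is zero, the whole ring, or a principal ideal — in particular if $n=1$ — then $I$ is vertex splittable by clause (i) of Definition \ref{def:vertex splittable}, so one may assume $I$ is none of these. Then every minimal generator has positive degree, hence some variable divides a minimal generator of $I$, and after relabelling the variables I would take $x_1$ to be such a variable.

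Next I would exhibit the vertex splitting predicted by the proof of \cite[Theorem 3.1]{F2}: put
$$
I_1=\bigl(\,u/x_1\ :\ u\in\mathcal{G}(I),\ x_1\mid u\,\bigr)\subseteq S,\qquad
I_2=\bigl(\,u\ :\ u\in\mathcal{G}(I),\ x_1\nmid u\,\bigr)\subseteq K[x_2,\dots,x_n],
$$
so that $I=x_1I_1+I_2$. By the minimality of $\mathcal{G}(I)$ the two displayed sets are already minimal generating sets, and $\mathcal{G}(I)=\mathcal{G}(x_1I_1)\sqcup\mathcal{G}(I_2)$; hence this is a vertex splitting of $I$ as soon as one checks that (1) $I_2\subseteq I_1$, (2) $I_1$ is componentwise polymatroidal, and (3) $I_2$ is componentwise polymatroidal as an ideal of $K[x_2,\dots,x_n]$. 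Granting (1)--(3), the ideal $I_1$ lies in $n$ variables and satisfies $\delta(I_1)=\sum_{x_1\mid u}(\deg(u)-1)<\delta(I)$, while $I_2$ lives in $n-1$ variables, so the inductive hypothesis applies to both $I_1$ and $I_2$; they are therefore vertex splittable, and so is $I$.

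Claim (3) should be the routine part: degree by degree one verifies that $(I_2)_{\langle j\rangle}$ coincides with the ideal $Q$ generated by those minimal generators of $I_{\langle j\rangle}$ not divisible by $x_1$, and that deleting $x_1$ from a polymatroidal ideal $P$ preserves the exchange property of Lemma \ref{lem:exchange}. Indeed, for $u,v\in\mathcal{G}(Q)\subseteq\mathcal{G}(P)$ and $i$ with $\deg_{x_i}(u)>\deg_{x_i}(v)$ one has $i\neq1$ because $\deg_{x_1}(u)=\deg_{x_1}(v)=0$, and the index $j$ furnished by $P$ satisfies $\deg_{x_j}(u)<\deg_{x_j}(v)$, which forces $j\neq1$ for the same reason; hence $x_j(u/x_i)\in\mathcal{G}(Q)$. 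Thus every $(I_2)_{\langle j\rangle}$ is polymatroidal and (3) follows.

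The substance is in (1) and (2), and here the plan is to re-read the proof of \cite[Theorem 3.1]{F2} rather than merely invoke its statement. In that proof the linear quotients order on a componentwise polymatroidal ideal is produced by repeatedly peeling off the generators divisible by a suitable variable, and the exchange-property computations carried out there show, at each such step, both that the contracted ideal $I_1$ is again componentwise polymatroidal and that every generator of $I$ not divisible by $x_1$ is a multiple of some $u/x_1$ with $x_1\mid u\in\mathcal{G}(I)$ — which are exactly assertions (2) and (1). The expected obstacle is thus not conceptual but organizational: one must extract from \cite{F2} precisely these two statements, and verify that the variable used there may indeed be taken as the splitting vertex $x_1$ above, so that the whole construction of \cite{F2} is recognized as an iterated vertex splitting.
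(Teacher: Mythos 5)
Your overall strategy matches the paper's: induct on a well-founded quantity, write $I=x_1I_1+I_2$ by isolating the generators divisible by a chosen variable, and appeal to the proof of \cite[Theorem 3.1]{F2} for the structural facts that $I_2\subseteq I_1$ and that $I_1$, $I_2$ stay componentwise polymatroidal. Your induction on $(n,\delta(I))$ is a harmless variant of the paper's induction on $|\mathcal{G}(I)|$, and your self-contained verification of property (3) by restricting the exchange property to the generators not involving $x_1$ is a genuine (small) improvement over the paper, which simply cites \cite{F2} for it.

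There is, however, a real gap at the very first step: you declare ``some variable divides a minimal generator of $I$, and after relabelling the variables I would take $x_1$ to be such a variable.'' An arbitrary variable dividing an arbitrary minimal generator will \emph{not} in general produce a vertex splitting, because the inclusion $I_2\subseteq I_1$ can fail. The paper's choice, following \cite{F2}, is that $x_i$ must divide a minimal monomial generator of \emph{minimal degree} of $I$; this is exactly what makes $I_2\subseteq I_1$ and the polymatroidality of $I_1$ go through. To see that the restriction is not cosmetic, take the paper's own Example, $I=(x_1^2,x_1x_3,x_3^2,x_1x_2x_4,x_2x_3x_4,x_2^2x_4^2)$, which is componentwise polymatroidal. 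The variable $x_2$ divides minimal generators of $I$ but none of the degree-$2$ ones; peeling it off gives $I_1=(x_1x_4,x_3x_4,x_2x_4^2)$ and $I_2=(x_1^2,x_1x_3,x_3^2)$, yet $x_1^2\notin I_1$, so $I_2\not\subseteq I_1$ and $I=x_2I_1+I_2$ is not a vertex splitting. Your closing sentence concedes that ``one must verify that the variable used there may indeed be taken as the splitting vertex $x_1$ above,'' which is the right instinct, but the proof as written never identifies the correct condition on the splitting vertex, and with the stated choice properties (1) and (2) that you need from \cite{F2} are simply false.
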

	\begin{proof} Let $I\subset S$ be a componentwise polymatroidal ideal. We prove the statement by induction on $|\mathcal{G}(I)|$.
		We may assume that all variables $x_i$ divide some minimal monomial generator of $I$. Moreover, it holds that 
		for any variable $x_i$ which divides a minimal monomial generator of minimal degree of $I$, we can write $I=x_iI_1+I_2$, where $\mathcal{G}(x_iI_1)=\{u\in \mathcal{G}(I):x_i\ \textup{divides}\ u\}$, $\mathcal{G}(I_2)=\mathcal{G}(I)\setminus\mathcal{G}(x_iI_1)$ and the following properties are satisfied (see the proof of \cite[Theorem 3.1]{F2}):
		\begin{enumerate}
			\item[(i)] $I_2\subseteq I_1$ as monomial ideals of $S$.
			\item[(ii)] $x_iI_1$ is a componentwise polymatroidal ideal of $S$.
			\item[(iii)] $I_2$ is a componentwise polymatroidal ideal of $K[x_1,\dots,x_{i-1},x_{i+1},\dots,x_n]$.
		\end{enumerate}
		By induction, both $I_1$ and $I_2$ are vertex splittable. Hence, so is $I$.
	\end{proof}

	We have the following~corollary.
	
	\begin{Corollary}\label{Cor:ICM-CPI-VS}
		Let $I\subset S$ be a componentwise polymatroidal ideal and let $x_i$ be any variable dividing some minimal monomial generator of least degree of $I$. Suppose that $I\subseteq\mm^2$. Then, the~following conditions are~equivalent.
		\begin{enumerate}
			\item[\textup{(a)}] $I$ is CM.
			\item[\textup{(b)}] $(I:x_i),(I,x_i)$ are CM componentwise polymatroidal ideals and $\depth S/(I:x_i)=\depth S/(I,x_i)$.
		\end{enumerate}
		Moreover, if~$I\subset S$ is a polymatroidal ideal generated in degree $d\ge2$ and $x_i$ is a variable dividing some monomial of $\mathcal{G}(I)$, then $(I:x_i)$ is polymatroidal. And, in~addition, if~$I$ is CM, then $(I:x_i)$ is also CM.
	\end{Corollary}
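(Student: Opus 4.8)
The plan is to reduce everything to Theorem~\ref{Thm:CM-VS} together with the structural analysis already carried out in the proof of Proposition~\ref{Prop:CPVS}. First I would set up the splitting. Since $I$ is componentwise polymatroidal it is vertex splittable by Proposition~\ref{Prop:CPVS}; more precisely, as $x_i$ divides a minimal monomial generator of least degree, the construction recalled there (taken from \cite[Theorem~3.1]{F2}) produces the vertex splitting $I=x_iI_1+I_2$, where $\mathcal G(x_iI_1)=\{u\in\mathcal G(I):x_i\mid u\}$, $\mathcal G(I_2)=\mathcal G(I)\setminus\mathcal G(x_iI_1)$, $I_2\subseteq I_1$, the ideal $x_iI_1$ is componentwise polymatroidal over $S$, and $I_2$ is componentwise polymatroidal over $R:=K[x_1,\dots,x_{i-1},x_{i+1},\dots,x_n]$. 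In particular $x_i$ is a splitting vertex of $I$, and, arguing exactly as in the proof of Theorem~\ref{Thm:CM-VS} (using $I_2\subseteq I_1$ and that $x_i$ divides no generator of $I_2$), one gets $(I:x_i)=I_1$ and $(I,x_i)=(I_2,x_i)$, so that $S/(I,x_i)\cong R/I_2$ and $\depth S/(I,x_i)=\depth S/I_2-1$.

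Next I would check that the class of componentwise polymatroidal ideals is stable under the two operations in question. Passing from $x_iI_1$ to $I_1$ amounts to dividing every generator by $x_i$, which shifts each graded strand down by one and leaves the exchange condition of Lemma~\ref{lem:exchange} intact in every degree; hence $(I:x_i)=I_1$ is again componentwise polymatroidal. Similarly, using that $I_2$ is componentwise polymatroidal over $R$, one verifies that $(I,x_i)=(I_2,x_i)$ is componentwise polymatroidal. Granting this, the equivalence of (a) and (b) is immediate from Theorem~\ref{Thm:CM-VS} applied to the splitting vertex $x_i$: $I$ is CM if and only if $(I:x_i)$ and $(I,x_i)$ are CM and $\depth S/(I:x_i)=\depth S/(I,x_i)$.

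For the remaining assertions, let $I$ be polymatroidal, generated in degree $d\ge2$, and let $x_i$ divide some $u\in\mathcal G(I)$. Then $\deg u=d$ is the least degree of a generator of $I$, so the first step applies and yields $(I:x_i)=I_1$; moreover $x_iI_1$ is componentwise polymatroidal and generated in the single degree $d$, hence polymatroidal, so $I_1$ is polymatroidal (equivalently, one invokes the classical fact that polymatroidal ideals are closed under colon by a variable). If in addition $I$ is CM, then the implication (a)$\Rightarrow$(b) of Theorem~\ref{Thm:CM-VS}, applied to $x_i$, forces $(I:x_i)=I_1$ to be CM.

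The step that needs the most care is the stability claim of the second paragraph, i.e.\ that $(I:x_i)$ and $(I,x_i)$ stay componentwise polymatroidal: the colon is the easy half, since it is just division by $x_i$ one graded strand at a time, whereas adjoining $x_i$ must be analysed degree by degree through the interplay of $(I_2)_{\langle j\rangle}$ with $x_i\mm^{j-1}$ inside $(I,x_i)_{\langle j\rangle}$. Everything else is a formal consequence of Theorem~\ref{Thm:CM-VS} and of the combinatorial bookkeeping done in the proof of Proposition~\ref{Prop:CPVS}.
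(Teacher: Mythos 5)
Your proposal follows the paper's route almost exactly: invoke the vertex splitting $I=x_iI_1+I_2$ from Proposition~\ref{Prop:CPVS}, identify $(I:x_i)=I_1$ and $(I,x_i)=(I_2,x_i)$ (with $S/(I,x_i)\cong R/I_2$ where $R=K[x_1,\dots,\widehat{x_i},\dots,x_n]$), and apply Theorem~\ref{Thm:CM-VS}. For the final assertion the paper simply cites \cite[Lemma~5.6]{F3}, while you argue directly that $x_iI_1$, being componentwise polymatroidal and generated in the single degree $d$, is polymatroidal, hence so is $I_1$; that is a perfectly good alternative.

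Your instinct to flag the componentwise-polymatroidality of $(I,x_i)$ over $S$ as the delicate step is exactly right, but be aware that this is not a gap that a careful degree-by-degree argument can close: the claim is false as stated. Fact (iii) of Proposition~\ref{Prop:CPVS} says only that $I_2$ is componentwise polymatroidal over the \emph{smaller} ring $R$, which is a genuinely different assertion from $(I,x_i)=(I_2,x_i)$ being componentwise polymatroidal over $S$. Concretely, $I=(x_1,x_2)^2\subset K[x_1,x_2,x_3]$ is a CM componentwise polymatroidal ideal with $I\subseteq\mm^2$ and $x_1$ dividing a generator of least degree; yet $(I,x_1)=(x_1,x_2^2)$ has $(I,x_1)_{\langle 2\rangle}=(x_1^2,x_1x_2,x_1x_3,x_2^2)$, which fails the exchange property of Lemma~\ref{lem:exchange} for $u=x_1x_3$, $v=x_2^2$, $i=1$ (the only admissible $j$ is $2$, and $x_2x_3$ is not a generator). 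So here (a) holds while (b), read literally, does not. This is a defect of the statement that the paper's one-line proof glosses over: the correct reading is that $I_2=(I,x_i)\cap R$ is CM and componentwise polymatroidal over $R$, which is precisely what fact (iii) supplies and what the isomorphism $S/(I,x_i)\cong R/I_2$ needs for the Cohen--Macaulay equivalence. In short, do not try to prove that $(I,x_i)$ is componentwise polymatroidal over $S$; instead note the statement should be reformulated, and that the substantive content of (a)$\Leftrightarrow$(b) — the CM and depth conditions — is an immediate consequence of Theorem~\ref{Thm:CM-VS}, as you correctly observe.
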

	\begin{proof} It follows by combining the vertex splitting presented in the proof of Proposition \label{Prop:CPVS} with the facts (ii) and (iii) and with Theorem \ref{Thm:CM-VS}. 
		For the last statement, see \cite[Lemma~5.6]{F3}.
	\end{proof}
	
	At the moment, to~classify all CM componentwise polymatroidal ideals seems {\color{black} a} hopeless task. For~instance, let $J$ be any componentwise polymatroidal ideal. Let $\ell$ be the highest degree of a minimal monomial generator of $J$, and~let $d>\ell$ be any integer. It is easy to see that $I=J+\mm^d$ is componentwise polymatroidal. Since $\dim S/I=0$, then $S/I$ is automatically~CM.
	
	\begin{Example}
		Consider the ideal $I=(x_1^2,x_1x_3,x_3^2,x_1x_2x_4,x_2x_3x_4,x_2^2x_4^2)$ of $S=K[x_1,x_2,x_3,x_4]$, see \cite[Example 3.2]{F2}. One can easily check that $I$ is a CM componentwise polymatroidal ideal. Indeed, it is not difficult to check that $I_{\langle j\rangle}$ is polymatroidal for $j=0,1,2,3,4$. For~$j\ge5$, the~ideal $I_{\langle j\rangle}=\mm^{j-4}I_{\langle 4\rangle}$ is polymatroidal because it is the product of two polymatroidal ideals \cite[Theorem~12.6.3]{HH2011}. Notice that in this case, $\dim S/I>0$.
	\end{Example}
	
	Nonetheless, if~$I$ is generated in a single degree, that is, if~$I$ is actually polymatroidal, then Herzog and Hibi \cite[Theorem 4.2]{HH2003} showed that $I$ is CM if and only if (i) $I$ is a principal ideal, (ii) $I$ is a \textit{squarefree Veronese ideal} $I_{n,d}$, that is, it is generated by all squarefree monomials of $S$ of a given degree $d\le n$, or~(iii) $I$ is a \textit{Veronese ideal}, that is, $I=\mm^d$ for some integer $d\ge1$.
	
	The proof presented by Herzog and Hibi is based on the computation of $\sqrt{I}$. We now present a different proof based on the criterion for Cohen--Macaulayness proved in \mbox{Theorem~\ref{Thm:CM-VS}}.
	\begin{Corollary}\label{Cor:HH-VS}
		A polymatroidal $I\subset S$ is CM if and only if $I$ is one of the following:
		\begin{enumerate}
			\item[\textup{(a)}] A principal ideal.
			\item[\textup{(b)}] A Veronese ideal.~    		
			\item[\textup{(c)}] A squarefree Veronese ideal.
		\end{enumerate}
	\end{Corollary}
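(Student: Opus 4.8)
The plan is to run the inductive machinery behind Theorem~\ref{Thm:CM-VS} in the polymatroidal setting.

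\textbf{Conventions and the easy direction.} After replacing $S$ by the subring generated by the variables occurring in $\mathcal{G}(I)$ --- which shifts $\depth S/I$ and $\dim S/I$ by the same amount and hence preserves the Cohen--Macaulay property --- I may assume that every variable divides some element of $\mathcal{G}(I)$, and I read ``Veronese'' and ``squarefree Veronese'' over this ambient ring. The implication ``$\Leftarrow$'' is immediate: a principal ideal defines a hypersurface, a Veronese ideal $\mm^d$ defines an Artinian ring, and $S/I_{n,d}$ is the Stanley--Reisner ring of the $(d-2)$-skeleton of the simplex on $n$ vertices; all of these are Cohen--Macaulay. (One could also deduce this from Theorem~\ref{Thm:CM-VS} itself: splitting $I_{n,d}$ at $x_1$ gives $(I_{n,d}:x_1)=I_{n-1,d-1}$ and $S/(I_{n,d},x_1)\cong K[x_2,\dots,x_n]/I_{n-1,d}$, whose depths agree by a Krull-dimension count, so the induction on $n$ closes.)

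\textbf{The converse: set-up and the principal case.} For ``$\Rightarrow$'' I argue by induction on the number of variables and, for that fixed, on the generating degree $d$. If $d=1$ then $I=\mm$, a Veronese ideal; so assume $d\ge 2$, whence $I\subseteq\mm^2$. Choose a variable $x_i$ dividing an element of $\mathcal{G}(I)$ and take the vertex splitting $I=x_iI_1+I_2$ of Proposition~\ref{Prop:CPVS}: by Corollary~\ref{Cor:ICM-CPI-VS}, $I_1=(I:x_i)$ is polymatroidal of degree $d-1$ and $I_2$ is polymatroidal of degree $d$ in the subring omitting $x_i$. If $x_i$ divides every element of $\mathcal{G}(I)$ --- which is automatic when $I$ is principal, and more generally when $\gcd(\mathcal{G}(I))\ne 1$, since one may then pick $x_i$ dividing that gcd --- then $I_2=0$, and Theorem~\ref{Thm:CM-VS} together with the Auslander--Buchsbaum formula forces $\depth S/I_1=n-1$, i.e.\ $\pd S/I_1=1$; hence $I_1$ is principal and $I=x_iI_1$ is principal, which is case~(a).

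\textbf{The converse: the remaining case and the main obstacle.} Otherwise $\gcd(\mathcal{G}(I))=1$ and $I$ is not principal, and we may choose $x_i$ dividing some but not all elements of $\mathcal{G}(I)$, so that $I_2\ne 0$. Then Theorem~\ref{Thm:CM-VS} gives that $S/I_1$ and $S/I_2$ are Cohen--Macaulay with $\depth S/I_1=\depth S/I_2-1$, and the inductive hypothesis --- applied to $I_2$ (fewer variables) and to $I_1$ (the same number of variables but smaller degree) --- shows that each of $I_1,I_2$ is principal, Veronese, or squarefree Veronese. The crux, and the step I expect to be hardest, is to feed these partial classifications back through the constraints $I_2\subseteq I_1$, $\mathcal{G}(I)=\mathcal{G}(x_iI_1)\sqcup\mathcal{G}(I_2)$, the exchange property of the polymatroidal ideal $I$ (Lemma~\ref{lem:exchange}), and the depth identity, so as to eliminate every ``mixed'' configuration: one must show that $I_1$ and $I_2$ are of the same type with matching shape --- either $I_1=\mm^{d-1}$ and $I_2$ is the Veronese ideal of degree $d$ on the remaining variables, forcing $I=\mm^d$, or $I_1$ is squarefree Veronese of degree $d-1$ and $I_2$ is the squarefree Veronese ideal of degree $d$ on the remaining variables, forcing $I=I_{n,d}$ --- the borderline situations in which $I_1$ or $I_2$ is literally a single monomial being absorbed here through the identities $I_{m,m}=(x_1\cdots x_m)$ and $\mm^1=I_{n,1}$. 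Once this rigidity is verified, the induction closes and the proof is complete.
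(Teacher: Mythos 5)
Your strategy is the same as the paper's: use the vertex splitting of Proposition~\ref{Prop:CPVS}, apply Theorem~\ref{Thm:CM-VS} and Corollary~\ref{Cor:ICM-CPI-VS}, and then inductively classify $I_1=(I:x_i)$ and $I_2$ before reassembling $I$. The preliminary reductions you give are all sound: the easy direction, the shift to the ambient ring on the variables actually occurring, the observation that $\gcd(\mathcal G(I))\ne 1$ forces $I_2=0$ and hence $\pd S/I_1=1$ and $I$ principal, and the choice of induction parameters (number of variables, then degree) in place of the paper's $|\mathcal G(I)|$ --- this is a legitimate reorganization.

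However, the proof is not complete. The paragraph you label ``the crux'' is where essentially all of the content lies, and you state the desired rigidity rather than prove it. The paper devotes the bulk of its argument to exactly this step, and it is not a routine matching exercise: one has to run through all nine combinations of $\{$principal, Veronese, squarefree Veronese$\}$ for $(I_1,I_2)$, in each sub-case pin down the exact number of variables involved via the depth identity $\pd S/I_1=\pd S/I_2+1$ combined with the explicit formulas $\pd S/\mm^d=n$ and $\pd S/I_{n,d}=n+1-d$ (Lemma~\ref{Lemma:depth-m-Ind-VS}, which you never invoke), and then either identify $I$ with $\mm^d$ or $I_{n,d}$, or rule the configuration out. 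The ruling out is itself non-trivial: some candidates fail because they violate the exchange property of Lemma~\ref{lem:exchange} (e.g.\ $x_n(x_1,\dots,x_{n-1})^{d-1}+(x_1,\dots,x_{n-2})^d$), some because they are unmixed (e.g.\ $x_n(x_1,\dots,x_{n-1})+(x_1,\dots,x_{n-1})^2$), and some require a further colon with a second variable and another appeal to the inductive hypothesis (the paper's Case 1 with $n=3$, $d\ge 3$). Your summary phrase ``$I_1$ and $I_2$ are of the same type with matching shape'' also glosses over the fact that in the principal-$I_2$ branch the two pieces are genuinely of different types and the identification only works after the borderline collapses you mention, so even the statement of the rigidity needs care. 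Until this case analysis is actually carried out, the argument has a genuine gap at its central step.
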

	
	For the proof, we need the following well-known identities. For~the convenience of the reader, we provide a proof that uses the vertex splittings technique.
	\begin{Lemma}\label{Lemma:depth-m-Ind-VS}
		Let $n,d\ge1$ be positive integers. Then
		$$
		\pd S/\mm^d=n,\ \ \text{and}\ \ \pd S/I_{n,d}\ =\ n+1-d.
		$$
	\end{Lemma}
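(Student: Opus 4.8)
The plan is to establish both equalities by induction, in each case applying the depth formula~(\ref{eq:depthBettiSplitVS}) --- proved inside the proof of Theorem~\ref{Thm:CM-VS} --- to the vertex splitting with respect to the variable $x_1$, and then passing from depths to projective dimensions via the Auslander--Buchsbaum formula~(\ref{eq:ABformula}). Both $\mm^d$ and $I_{n,d}$ are polymatroidal, hence vertex splittable by \cite[Lemma~2.1]{MNS22}, and $x_1$ is a splitting vertex, since it divides $x_1^d\in\mathcal{G}(\mm^d)$, respectively $x_1x_2\cdots x_d\in\mathcal{G}(I_{n,d})$. Throughout I use that adjoining a polynomial variable does not change the projective dimension: if $J$ is a monomial ideal of $K[x_2,\dots,x_n]$ and $JS$ denotes its extension to $S=K[x_1,\dots,x_n]$, then $\pd_S S/JS=\pd_{K[x_2,\dots,x_n]}K[x_2,\dots,x_n]/J$, equivalently $\depth_S S/JS=\depth_{K[x_2,\dots,x_n]}K[x_2,\dots,x_n]/J+1$.

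For the identity $\pd S/\mm^d=n$ I argue by induction on $n$, and within a fixed $n$ by induction on $d$. The base cases are clear: for $n=1$ the ring $S/(x_1^d)$ is a hypersurface, so $\pd=1$; for $d=1$ the Koszul complex gives $\pd S/\mm=n$. Assume $n,d\ge2$. The (unique) vertex splitting of $\mm^d$ with splitting vertex $x_1$ is $\mm^d=x_1\mm^{d-1}+\nn^{d}$, where $\nn=(x_2,\dots,x_n)$, so $I_1=\mm^{d-1}$ and $I_2=\nn^{d}$. By induction on $d$ we have $\pd S/\mm^{d-1}=n$, hence $\depth S/\mm^{d-1}=0$; and by induction on $n$, $\depth_S S/\nn^{d}=1+\depth_{K[x_2,\dots,x_n]}K[x_2,\dots,x_n]/(x_2,\dots,x_n)^{d}=1+0=1$. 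Formula~(\ref{eq:depthBettiSplitVS}) now gives $\depth S/\mm^d=\min\{0,\,1-1\}=0$, so $\pd S/\mm^d=n$.

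For the identity $\pd S/I_{n,d}=n+1-d$ (with $1\le d\le n$) I argue by induction on $n$. The extreme cases serve as the base: $I_{n,1}=\mm$ gives $\pd=n$, and $I_{n,n}=(x_1x_2\cdots x_n)$ is principal, giving $\pd=1=n+1-n$. Now suppose $2\le d\le n-1$. The vertex splitting of $I_{n,d}$ with splitting vertex $x_1$ has $I_1$ generated by all squarefree monomials of degree $d-1$ in $x_2,\dots,x_n$ and $I_2$ generated by all squarefree monomials of degree $d$ in $x_2,\dots,x_n$; that is, $I_1$ and $I_2$ are the extensions to $S$ of the squarefree Veronese ideals $I_{n-1,d-1}$ and $I_{n-1,d}$ of $K[x_2,\dots,x_n]$. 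By the inductive hypothesis, $\pd_S S/I_1=(n-1)+1-(d-1)=n-d+1$ and $\pd_S S/I_2=(n-1)+1-d=n-d$, whence $\depth S/I_1=d-1$ and $\depth S/I_2=d$. Plugging these into~(\ref{eq:depthBettiSplitVS}) gives $\depth S/I_{n,d}=\min\{d-1,\,d-1\}=d-1$, so $\pd S/I_{n,d}=n-(d-1)=n+1-d$.

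I expect no essential difficulty here; the substance is the elementary observation that, after removing the variable $x_1$, a vertex splitting of a Veronese (respectively squarefree Veronese) ideal yields two ideals of the same kind in one fewer variable. The only point that demands a little care is arranging the (double) induction so that the parameters of these two smaller ideals always lie in the range covered by the inductive hypothesis --- in particular, ensuring $I_2\ne0$, which is precisely what makes it necessary to isolate the boundary case $d=n$ in the squarefree situation.
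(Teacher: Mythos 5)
Your proof is correct and uses essentially the same technique as the paper: decompose via a vertex splitting and compute the depth through Formula~(\ref{eq:depthBettiSplitVS}), then pass to projective dimension by Auslander--Buchsbaum. The paper splits at $x_n$ (so $I_{n,d}=x_nI_{n-1,d-1}+I_{n-1,d}$) while you split at $x_1$; this is a purely cosmetic difference. The one place where you take a longer route is the Veronese identity: the paper disposes of $\pd S/\mm^d=n$ in one line, observing that $\dim S/\mm^d=0$ forces $\depth S/\mm^d=0$, whereas you run a full double induction on $n$ and $d$ through the vertex splitting $\mm^d=x_1\mm^{d-1}+\nn^d$. Both are correct; the paper's is simpler but yours has the virtue of being a uniform demonstration of the method. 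Your treatment of the boundary cases ($d=1$ and $d=n$) and of the behaviour of $\pd$ under adjoining a polynomial variable is careful and correct, and you avoid the small slips in the paper's displayed computation (the paper writes $\min$ where $\max$ is meant when passing from depth to $\pd$, and the final line reads $n+d-1$ where it should read $n+1-d$; neither affects the conclusion since the two quantities inside the $\min$/$\max$ coincide).
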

	\begin{proof}
		Since $\dim S/\mm^d=0$, we have $\depth S/\mm^d=0$ and $\pd S/\mm^d=n$. If~$d=1$, then $I_{n,1}=\mm$ and $\pd S/I_{n,1}=n$. If~$1<d\le n$, we notice that $I_{n,d}=x_nI_{n-1,d-1}+I_{n-1,d}$ is a vertex splitting. By~Formula (\ref{eq:depthBettiSplitVS}) and induction on $n$ and $d$,
		\begin{align*}
			\pd S/I_{n,d}\ &=\ \min\{\pd S/I_{n-1,d-1},\pd S/I_{n-1,d}+1\}\\
			&=\ \min\{n-1+1-(d-1),n-1+1-d+1\}\\
			&=\ n+d-1,
		\end{align*}
		as wanted.
	\end{proof}
	
	We are now ready for the proof of Corollary \ref{Cor:HH-VS}.
	\begin{proof}[Proof of Corollary \ref{Cor:HH-VS}] 
		Let $I$ be a polymatroidal ideal. We proceed by induction on $|\mathcal{G}(I)|$. If~$|\mathcal{G}(I)|=1$, then $I$ is principal and it is CM. Now, let $|\mathcal{G}(I)|>1$. If~$I=\mm$, then $I$ is CM. Thus, we assume that $I$ is generated in degree $d\ge2$, that all variables $x_i$ divide some minimal monomial generator of $I$ and~that the greatest common divisor of the minimal monomial generators of $I$ is one. By~Proposition \ref{Prop:CPVS} and Corollary \ref{Cor:ICM-CPI-VS}, we have a vertex splitting $I=x_iI_1+I_2$ for each variable $x_i$~$I_1$ and $I_2$ are CM polymatroidal ideals with $\depth S/I_1=\depth S/I_2-1$. Thus, $\pd S/I_1=\pd S/I_2+1$. We may assume that $x_i=x_n$.
		
		Since $|\mathcal{G}(x_nI_1)|,|\mathcal{G}(I_2)|$ are strictly less than $|\mathcal{G}(I)|$, by~induction, it follows that $I_1$ is either a principal ideal, a~Veronese ideal or~a squarefree Veronese ideal, and~the same possibilities occur for $I_2$. We distinguish the various possibilities.
		
		\begin{itemize}[leftmargin=*,labelsep=-2mm]
			\item[] \textbf{Case 1.} Let $I_2$ be a principal ideal, then $\pd S/I_2=1$. Thus, $\pd S/I_1=2$.
		\end{itemize}
		
		Under this assumption, $I_1$ cannot be principal because $\pd S/I_1=2\ge1$.
		
		Assume that $I_1$ is a Veronese ideal in $m$ variables. Since all variables of $S$ divide some monomial of $\mathcal{G}(I)$ and $I_2\subset I_1$, then $I_1=\mm^d$ or $I_1=\nn^d$, where $\mm=(x_1,\dots,x_n)$ and $\nn=(x_1,\dots,x_{n-1})$. Thus, $m=n$ or $m=n-1$. Lemma \ref{Lemma:depth-m-Ind-VS} implies $m=\pd S/I_1=2$. So, $n=2$ or $n=3$.
		
		If $n=2$, then $I=x_2(x_1,x_2)^{d-1}+(x_1^d)=(x_1,x_2)^d$, which is CM and Veronese, or~\linebreak  $I=x_2(x_1^{d-1})+(x_1^d)=x_1^{d-1}(x_1,x_2)$, which is not~CM.
		
		Otherwise, if~$n=3$, then $I=x_3(x_1,x_2,x_3)^{d-1}+(u)$ or else $I=x_3(x_1,x_2)^{d-1}+(u)$ where $u\in K[x_1,x_2]$ is a monomial of degree $d$. If~$d=2$, then one easily sees that only in the second case and for $u=x_1x_2$ we have that $I$ is 
		a CM polymatroidal ideal, which is the squarefree Veronese $I_{3,2}$. Otherwise, suppose $d\ge3$. We may assume that $x_1^2$ divides $u$. In~the first case, $(I:x_1)=x_3(x_1,x_2,x_3)^{d-2}+(u/x_1)$ is not principal, nor Veronese, nor 
		squarefree Veronese. Thus, by~induction, $(I:x_1)$ is not a CM polymatroidal ideal, and~by Corollary \ref{Cor:ICM-CPI-VS}, we deduce that $I$ is also not CM. Similarly, in~the second case, we see that $(I:x_1)=x_3(x_1,x_2)^{d-1}+(u/x_1)$, and~thus also $I$, is not a CM polymatroidal~ideal.
		
		Assume now that $I_1$ is a squarefree Veronese ideal in $m$ variables. Then as argued in the case 1.2 we have $m=n$ or $m=n-1$. Lemma \ref{Lemma:depth-m-Ind-VS} gives $\pd S/I_1=m+1-(d-1)=2$. Thus, $d=m$. Hence $d=n$ or $d=n-1$. So $I=x_nI_{n,n-1}+(u)$ or $I=x_nI_{n-1,n-2}+(u)$ where $u\in K[x_1,\dots,x_{n-1}]$ is a monomial of degree $d=n$ in the first case or $d=n-1$ in the second case. In~the first case there is $i$ such that $x_i^2$ divides $u$. Say $i=1$. Then $(I:x_1)=x_n(I_{n,n-1}:x_1)+(u/x_1)$ is not a principal ideal, neither a Veronese ideal, neither a squarefree Veronese. Therefore, by~induction $(I:x_1)$ we see that is not a CM polymatroidal ideal, and~by Corollary \ref{Cor:ICM-CPI-VS} $I$ is also not a CM polymatroidal ideal. Similarly, in~the second case, if~$u=x_1\cdots x_{n-1}$, then $I=I_{n,n-1}$ is a CM squarefree Veronese ideal. Otherwise, $x_i^2$, say with $i=1$, divides $u$, and then, arguing as before, we see that $I$ is not a CM polymatroidal ideal.
		
		\begin{itemize}[leftmargin=*,labelsep=-2mm]
			\item[]\textbf{Case 2.} Let $I_2$ be a Veronese ideal in $m$ variables, then $\pd S/I_2=m\le n-1$.
		\end{itemize}
		
		Under this assumption, $I_1$ cannot be principal because $\pd S/I_1=m+1>1$.
		
		Assume that $I_1$ is a Veronese ideal in $\ell$ variables. Then $\ell=n$ or $\ell=n-1$. Lemma \ref{Lemma:depth-m-Ind-VS} implies that $\ell=\pd S/I_1=\pd S/I_2+1=m+1$. Thus, $\ell=n$ and $m=n-1$ or $\ell=n-1$ and $m=n-2$. In~the first case, $I=x_n\mm^{d-1}+(x_1,\dots,x_{n-1})^d=\mm^d$ is a CM Veronese ideal. In~the second case, up~to relabeling, we can write $I=x_n(x_1,\dots,x_{n-1})^{d-1}+(x_1,\dots,x_{n-2})^d$. However, this ideal is not polymatroidal. Otherwise, by~the exchange property {\color{black}(Lemma \ref{lem:exchange}}) applied to $u=x_nx_{n-1}^{d-1}$ and $v=x_{n-2}^d$, we should have $x_{n-2}x_{n-1}^{d-1}\in I$, which is not the~case.
		
		Assume now that $I_1$ is a squarefree Veronese ideal in $\ell$ variables.Then $\ell=n$ or $\ell=n-1$. Lemma \ref{Lemma:depth-m-Ind-VS} implies that $\pd S/I_1=\ell+1-(d-1)=\ell+2-d=\pd S/I_2+1=m+1$. Thus, either $m=n+1-d$ or $m=n-d$. Up~to relabeling, we have either \linebreak  $I=x_nI_{n,d-1}+(x_1,\dots,x_{n+1-d})^d$ or $I=x_nI_{n-1,d-1}+(x_1,\dots,x_{n-d})^d$. If~$d=2$, then these ideals become either $I=(x_1,\dots,x_n)^2$, which is a CM Veronese ideal, or~\linebreak  $I=x_n(x_1,\dots,x_{n-1})+(x_1,\dots,x_{n-2})^2$, which is not polymatroidal because the exchange property does not hold for $u=x_nx_{n-1}$ and $v=x_{n-2}^2$ since $x_{n-1}x_{n-2}\notin I$. If~$d\ge3$, then the above ideals are not polymatroidal. In~the first case, the exchange property does not hold for $u=x_{n+1-d}^d\in I_2$ and $v=(x_{n+2-d}\cdots x_{n})x_n\in x_n I_1$, otherwise $x_jx_{n+1-d}^{d-1}\in I$ for some $n+2-d\le j\le n$, which is not the case.
		
		\begin{itemize}[leftmargin=*,labelsep=-2mm]
			\item[]	\textbf{Case 3.} Let  $I_2$ be a squarefree Veronese in $m$ variables, $m\le n-1$. Then Lemma \ref{Lemma:depth-m-Ind-VS} implies $\pd S/I_2=m+1-d$ with $d\le m$. Hence, $\pd S/I_1=m+2-d$.
		\end{itemize}
		
		In such a case, the ideal $I_1$ cannot be principal because $\pd S/I_1=m+2-d>1$.
		
		Assume that $I_1$ is a Veronese ideal in $\ell$ variables. Then $\ell=n$ or $\ell=n-1$. Lemma~\ref{Lemma:depth-m-Ind-VS} implies that $\ell=\pd S/I_1=m+2-d$. Hence, either $d=m-n+2$ or $d=m-n+3$. Since $m\le n-1$, either $d\le 1$ or $d\le2$. Only the case $d=2$ is possible. If~$d=2$, then $\ell=m=n-1$ and we have $I=x_n(x_1,\dots,x_{n-1})+(x_1,\dots,x_{n-1})^2$. This ideal is not CM, otherwise it would be height-unmixed. Indeed, $(I:x_n)=(x_1,\dots,x_{n-1})$ and $(I:x_{n-1})=(x_1,\dots,x_n)$ are two associated primes of $I$ having different heights.
		\\
		Finally, assume that $I_1$ is a squarefree Veronese ideal in $\ell$ variables. Then $\ell=n$ or $\ell=n-1$. Lemma \ref{Lemma:depth-m-Ind-VS} implies that $\pd S/I_1=\ell+1-(d-1)=\ell+2-d=m+2-d$. Thus, $\ell=m$ and so either $\ell=m=n$ or $\ell=m=n-1$. The~first case is impossible because $m\le n-1$. In~the second case, we have $I=x_nI_{n-1,d-1}+I_{n-1,d}=I_{n,d}$ which is a CM squarefree Veronese ideal.
	\end{proof}
	
	\subsection{Bi-Cohen-Macaulay~Graphs} Let $I\subset S$ be a squarefree monomial ideal. Then $I$ may be seen as the Stanley--Reisner ideal of a unique simplicial complex on the vertex set $\{1,\dots,n\}$. Attached to $I$ is the Alexander dual $I^\vee$, which is again a squarefree monomial ideal. We say that $I$ is \textit{bi-Cohen--Macaulay} (bi-CM for~short) if both $I$ and $I^\vee$ are CM. By~the Eagon--Reiner criterion \cite[Theorem 8.1.9]{HH2011}, $I$ has a linear resolution if and only if $I^\vee$ is CM. Hence, $I$ is bi-CM if and only if it is CM with linear resolution.\smallskip
	
	Let $G$ be a finite simple graph on the vertex set $V(G)=\{1,\dots,n\}$ with edge set $E(G)$. The~\textit{edge ideal} $I(G)$ of $G$ is the squarefree monomial ideal of $S$ generated by the monomials $x_ix_j$ with $\{i,j\}\in E(G)$ \cite{RV}. The~Alexander dual of $I(G)$ is the squarefree monomial ideal of $S$ generated by the squarefree monomial  $x_{i_1}\cdots x_{i_t}$ such that $\{i_1,\ldots, i_t\}$ is a minimal vertex cover of $G$ \cite{RV}. Such an ideal is denoted by $J(G)$ and, since its definition, it is often called the \textit{cover ideal} of $G$.

	We say that $G$ is a bi-CM graph if $I(G)$ is bi-CM.\smallskip
	
	Let $G$ be a graph. The
	~\textit{open neighborhood} of $i\in V(G)$ is the set
	$$
	N_G(i)\ =\ \{j\in V(G)\ :\ \{i,j\}\in E(G)\}.
	$$
	
	A graph $G$ is called \textit{chordal} if it has no induced cycles of a length bigger than three. 
	A \textit{perfect elimination order} of $G$ is an ordering $v_1,\dots,v_n$ of its vertex set $V(G)$ such that $N_{G_i}(v_i)$ induces a complete subgraph on $G_i$, where $G_i$ is the induced subgraph of $G$ on the vertex set $\{i,i+1,\dots,n\}$. Hereafter, if~$1,2,\dots,n$ is a perfect elimination order of $G$, we highlight it by $x_1>x_2>\dots>x_n$.  For~a \emph{complete} graph $G$, we mean  a graph  satisfying the property that every set $\{i,j\}$ with $i,j\in V(G)$, $i\ne j$ is an edge of $G$. 
	
	\begin{Theorem}[\label{Thm:Dirac}\textup{\cite{Dirac61}}] A finite simple graph $G$ is chordal if and only if $G$ admits a perfect elimination order.
	\end{Theorem}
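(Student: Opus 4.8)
The plan is to prove the two implications separately, disposing of the easy direction first. Suppose $G$ admits a perfect elimination order, written in the paper's convention as $v_1 > v_2 > \dots > v_n$, and suppose toward a contradiction that $G$ contains an induced cycle $C$ of length at least four. Among the vertices of $C$ let $v_i$ be the one of smallest index. Then $v_i$ has exactly two neighbors on $C$, and both have index larger than $i$, hence both lie in $G_i$ and in $N_{G_i}(v_i)$. Since $N_{G_i}(v_i)$ spans a complete subgraph, these two neighbors are adjacent; but in an induced cycle of length $\ge 4$ the two neighbors of a vertex are non-adjacent, a contradiction. Thus $G$ is chordal.

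For the converse I would reduce matters to the following lemma of Dirac: every chordal graph on at least two vertices has a \emph{simplicial vertex} (a vertex whose neighborhood spans a complete subgraph), and if the graph is not complete it has two non-adjacent simplicial vertices. Granting the lemma, one argues by induction on $|V(G)|$: pick a simplicial vertex $v$, set $v_1 = v$, observe that $G\setminus v$ is again chordal (it is an induced subgraph), apply the inductive hypothesis to obtain a perfect elimination order $v_2 > \dots > v_n$ of $G\setminus v$, and check that prepending $v_1$ gives a perfect elimination order of $G$: for $i \ge 2$ the graph $G_i$ is an induced subgraph of $G\setminus v$, so the condition on $v_i$ is inherited, while for $i=1$ it holds by the choice of $v$.

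The heart of the matter, and the step I expect to cost the most work, is Dirac's lemma, which I would prove by induction on $|V(G)|$, the nontrivial case being when $G$ is not complete. Choose non-adjacent vertices $a,b$ and a minimal $a$--$b$ separator $S$, and let $A$ be the vertex set of the connected component of $G\setminus S$ containing $a$. The crucial sublemma — and the only place chordality is genuinely used — is that in a chordal graph every minimal separator spans a complete subgraph. It is proved by observing that, by minimality of $S$, each vertex of $S$ has a neighbor in the $a$-component and a neighbor in the $b$-component of $G\setminus S$; hence, given $u,v\in S$, one can form a shortest $u$--$v$ path through each of these two components, and if $u,v$ were non-adjacent the concatenation of these two paths would be a chordless cycle of length $\ge 4$, contradicting chordality.

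Once $S$ is known to be a clique, the induction closes. The graph $G[A\cup S]$ is chordal on fewer vertices (it omits $b$), so by the inductive hypothesis it has a simplicial vertex; moreover, since $S$ is a clique, at least one such simplicial vertex lies in $A$ (in the non-complete case this is exactly where the strengthened form of the lemma, providing two non-adjacent simplicial vertices of which at most one lies in the clique $S$, is used). A vertex of $A$ has all of its $G$-neighbors inside $A\cup S$, so it remains simplicial in $G$. Carrying out the symmetric argument on the $b$-side produces a second simplicial vertex of $G$ lying in a different component of $G\setminus S$, hence non-adjacent to the first. This completes the lemma, and with it the theorem. (An alternative route to the converse is to run maximum cardinality search and show the resulting vertex order is a perfect elimination order; I would prefer the separator argument above as it is self-contained and avoids setting up the search algorithm.)
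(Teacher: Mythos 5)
The paper states this theorem purely as a cited result of Dirac and gives no proof of its own, so there is nothing internal to compare against. Your proof is correct and is, in essence, Dirac's original argument: the easy implication, by taking the minimum-index vertex of a long induced cycle and forcing a chord from the clique condition on its later neighbors, is standard; and your route for the converse --- reducing to the lemma that a chordal graph on at least two vertices has a simplicial vertex (two non-adjacent ones when the graph is not complete), which in turn rests on the sublemma that minimal vertex separators in a chordal graph induce cliques, and then building the elimination order by repeatedly peeling off a simplicial vertex --- is exactly the classical proof, complete and correctly handled (including the point that only a vertex of $A$, not of $S$, is guaranteed to remain simplicial when one passes from $G[A\cup S]$ back to $G$).
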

	
	The edge ideals with linear resolution were classified by Fr\"oberg~\cite{Froberg88}. Recall that the \textit{complementary graph} $G^c$ of $G$ is the graph with vertex set $V(G^c)=V(G)$ and where $\{i,j\}$ is an edge of $G^c$ if and only if $\{i,j\}\notin E(G)$. A~graph $G$ is called \textit{cochordal} if and only if $G^c$ is~chordal.
	
	\begin{Theorem}[\label{Thm:Froberg}\textup{\cite[Theorem 1]{Froberg88}}] Let $G$ be a finite simple graph. Then $I(G)$ has a linear resolution if and only if $G$ is cochordal.
	\end{Theorem}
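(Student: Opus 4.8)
The plan is to prove the two implications of the equivalence ``$I(G)$ has a linear resolution $\iff$ $G^c$ is chordal'' separately. The forward implication fits naturally into the vertex-splitting framework developed above, while the converse needs a genuine homological input.

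For ``$G^c$ chordal $\Rightarrow$ $I(G)$ has a linear resolution'' I would induct on $|V(G)|$; the base case ($G$ edgeless, $I(G)=0$) is vacuous, and we may assume $G$ has no isolated vertices, since these change neither the chordality of $G^c$ nor the graded Betti numbers of $I(G)$. By Dirac's theorem (Theorem~\ref{Thm:Dirac}), the chordal graph $G^c$ has a simplicial vertex $v$, i.e.\ $N_{G^c}(v)$ is a clique of $G^c$, and $G^c-v$ is again chordal. The crucial step is to translate ``$v$ simplicial in $G^c$'' into the language of $G$: since $N_{G^c}(v)=V(G)\setminus(\{v\}\cup N_G(v))$, this condition says exactly that $V(G)\setminus(\{v\}\cup N_G(v))$ is an independent set of $G$, equivalently that every edge of $G-v$ is incident to a vertex of $N_G(v)$. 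Setting $I_1=(x_a:a\in N_G(v))$ and $I_2=I(G-v)$, the latter remark gives $I_2\subseteq I_1$; moreover $I_1$ is generated by a subset of the variables (hence vertex splittable), $I_2=I(G-v)$ is vertex splittable by the inductive hypothesis (its complement $G^c-v$ is chordal), and $\mathcal{G}(I(G))$ is the disjoint union of $\mathcal{G}(x_vI_1)=\{x_vx_a:a\in N_G(v)\}$ (the edges at $v$) and $\mathcal{G}(I_2)$ (the edges away from $v$). Thus $I(G)=x_vI_1+I_2$ is a vertex splitting, hence a Betti splitting by Theorem~\ref{thm:MKA16}. As in the proof of Theorem~\ref{Thm:CM-VS} one has $x_vI_1\cap I_2=x_vI_2$, so in Formula~(\ref{eq:BettiSplittingI=P+Q}) each of the three contributions --- from the Koszul-resolved ideal $x_vI_1$, from $I_2$ (linearly resolved by induction), and from $x_vI_2$ --- is concentrated in the strand $j=i+2$; hence $\beta_{i,j}(I(G))=0$ for $j\neq i+2$, which, $I(G)$ being generated in degree $2$, is precisely a linear resolution.

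For the converse I would argue contrapositively. If $G^c$ is not chordal it contains an induced cycle $C$ of length $\ell\ge4$ on a vertex set $W$, and then $G[W]$ is the complement $\overline{C_\ell}$ of an $\ell$-cycle. By Hochster's formula \cite{HH2011}, $\beta_{i,U}(S/I(G[W]))=\beta_{i,U}(S/I(G))$ for $U\subseteq W$ and is $0$ otherwise, so $\beta_{i,j}(S/I(G[W]))\le\beta_{i,j}(S/I(G))$ for all $i,j$; it therefore suffices to show that $I(\overline{C_\ell})=I(G[W])$ has no linear resolution. The independence complex of $\overline{C_\ell}$ is the clique complex of $C_\ell$, which --- $C_\ell$ being triangle-free for $\ell\ge4$ --- is the $1$-dimensional complex $C_\ell$ itself, a topological circle, so $\widetilde{H}_1(C_\ell;K)\cong K\neq0$. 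Hochster's formula then yields $\beta_{\ell-2,\ell}(S/I(\overline{C_\ell}))=\dim_K\widetilde{H}_1(C_\ell;K)=1$, i.e.\ $\beta_{\ell-3,\ell}(I(\overline{C_\ell}))\neq0$; since $\ell\neq(\ell-3)+2$, this Betti number lies off the linear strand, a contradiction. (Alternatively, one could invoke the Eagon--Reiner criterion \cite[Theorem~8.1.9]{HH2011} and verify that the cover ideal of $\overline{C_\ell}$ is not Cohen--Macaulay.)

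The forward direction is then a routine bookkeeping induction once the dictionary ``$v$ simplicial in $G^c$'' $\Leftrightarrow$ ``$V(G)\setminus(\{v\}\cup N_G(v))$ independent in $G$'' is in place, and it uses only ingredients already available in the paper. The main obstacle I anticipate is in the converse: showing that the anti-hole $\overline{C_\ell}$ genuinely fails to have a linear resolution requires real homological content --- Hochster's formula, or equivalently the fact that $\reg S/I(C_\ell)=2$ --- and one must make sure this failure propagates from the induced subgraph $G[W]$ back to $G$, which again is cleanest via Hochster's formula and the fact that Betti numbers never increase under vertex deletion.
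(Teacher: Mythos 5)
The paper does not prove this statement; it is quoted as Fr\"oberg's theorem \cite[Theorem 1]{Froberg88} and used as a black box, so there is no internal proof to compare against. Your blind proof is, however, correct, and it is worth noting that it reconstructs the result from exactly the ingredients the paper does develop. Your forward direction is in effect a self-contained derivation of Proposition~\ref{Prop:MKA16}: you use Dirac's theorem to extract a simplicial vertex $v$ of $G^c$, translate ``$N_{G^c}(v)$ is a clique'' into ``$V(G)\setminus(\{v\}\cup N_G(v))$ is independent in $G$'' to obtain $I_2\subseteq I_1$, and then exhibit $I(G)=x_v(x_a:a\in N_G(v))+I(G-v)$ as a vertex splitting (hence a Betti splitting by Theorem~\ref{thm:MKA16}); the bookkeeping showing all three contributions in Formula~(\ref{eq:BettiSplittingI=P+Q}) live in the strand $j=i+2$ is correct, using $x_vI_1\cap I_2=x_vI_2\cong I_2(-1)$. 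One minor presentational point: your induction really needs to carry both ``$I(G)$ is vertex splittable'' and ``$I(G)$ has a linear resolution'' (or simply the former, since vertex splittable implies linear quotients implies linear resolution in a single degree), because vertex splittability of $I_2$ is required to invoke Theorem~\ref{thm:MKA16}. Your converse via Hochster's formula is also correct: restriction to an induced anticycle $\overline{C_\ell}$ only decreases multigraded Betti numbers, the independence complex of $\overline{C_\ell}$ is the clique complex of $C_\ell$, which is a topological circle for $\ell\ge4$, and $\widetilde H_1\neq0$ produces the nonlinear Betti number $\beta_{\ell-3,\ell}(I(\overline{C_\ell}))\neq0$. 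This is the standard combinatorial-topological argument; Fr\"oberg's original proof is different in flavor, and the paper under review simply cites the theorem, so your write-up supplies a proof that the authors chose not to include but which fits their vertex-splitting framework nicely.
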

	
	We quote the next fundamental result which was proved by Moradi and Khosh-Ahang~\cite[Theorem 3.6, Corollary 3.8]{MKA16}.
	\begin{Proposition}\label{Prop:MKA16}
		Let $G$ be a finite simple graph. Then $I(G)$ has linear resolution if and only if $I(G)$ is vertex splittable. Furthermore, if~$x_1>\dots>x_n$ is a perfect elimination order of $G^c$, then
		$$
		I(G)\ =\ x_1(x_j:j\in N_G(1))+I(G\setminus \{1\})
		$$
		is a vertex splitting of $I(G)$.
	\end{Proposition}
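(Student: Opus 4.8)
The plan is to prove the two implications of the equivalence separately, producing the displayed vertex splitting as a by-product. For ``vertex splittable $\Rightarrow$ linear resolution'', recall --- as noted in the Remark following Definition~\ref{def:vertex splittable}, via \cite[Theorem~2.4]{MKA16} --- that every vertex splittable ideal has linear quotients; since $I(G)$ is generated in degree $2$ it is equigenerated, and an equigenerated monomial ideal with linear quotients has a linear resolution.

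For the converse, assume $I(G)$ has a linear resolution. By Fr\"oberg's Theorem~\ref{Thm:Froberg}, $G$ is cochordal, i.e. $G^c$ is chordal; by Dirac's Theorem~\ref{Thm:Dirac}, $G^c$ admits a perfect elimination order, which after relabelling the vertices we write as $x_1>\dots>x_n$. I would then prove, by induction on $n$, the sharper statement that also yields the ``Furthermore'' part: \emph{whenever $x_1>\dots>x_n$ is a perfect elimination order of $G^c$, the decomposition $I(G)=x_1(x_j:j\in N_G(1))+I(G\setminus\{1\})$ is a vertex splitting of $I(G)$; in particular $I(G)$ is vertex splittable.} When $n=1$ one has $I(G)=0$ and there is nothing to prove. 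For $n>1$ set $I_1=(x_j:j\in N_G(1))\subset S$ and $I_2=I(G\setminus\{1\})\subset K[x_2,\dots,x_n]$ and check the requirements of Definition~\ref{def:vertex splittable}(ii): (1) $I_1$ is generated by a subset of the variables, hence vertex splittable; (2) $(G\setminus\{1\})^c$ is the induced subgraph of $G^c$ on $\{2,\dots,n\}$, and $x_2>\dots>x_n$ is a perfect elimination order of it --- the defining condition only constrains the induced subgraphs on $\{i,\dots,n\}$ for $i\ge2$ --- so $I_2$ is vertex splittable by the induction hypothesis; (3) $\mathcal{G}(I(G))=\{x_ax_b:\{a,b\}\in E(G)\}$ is the disjoint union of $\mathcal{G}(x_1I_1)$, the edges through vertex $1$, and $\mathcal{G}(I_2)$, the edges avoiding vertex $1$; (4) $I_2\subseteq I_1$.

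Step (4) is the crux, and the only place where chordality is genuinely used. Suppose $x_ax_b\in\mathcal{G}(I_2)$, so $\{a,b\}\in E(G)$ with $a,b\ne1$, yet $a,b\notin N_G(1)$; then $\{1,a\},\{1,b\}\in E(G^c)$, hence $a,b\in N_{G^c}(1)$, and since $1$ is the first vertex of a perfect elimination order of $G^c$ the set $N_{G^c}(1)$ induces a complete subgraph of $G^c$, forcing $\{a,b\}\in E(G^c)$ --- a contradiction. Thus $I_2\subseteq I_1$, all conditions of Definition~\ref{def:vertex splittable}(ii) hold, and the induction closes. One should also note the degenerate case $N_G(1)=\emptyset$: the perfect elimination order then forces $G^c=K_n$, so $I(G)=0$ and the statement is vacuous. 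I expect no real difficulty beyond step (4); everything else is routine bookkeeping with edge ideals and the definitions of perfect elimination order and vertex splitting.
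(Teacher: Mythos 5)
The paper does not prove this proposition; it is quoted verbatim from Moradi and Khosh-Ahang, citing \cite[Theorem~3.6, Corollary~3.8]{MKA16}. Your argument is therefore a self-contained proof where the paper offers only a citation, and it is correct. The forward implication is exactly as you say: vertex splittable ideals have linear quotients by \cite[Theorem~2.4]{MKA16}, and an equigenerated ideal with linear quotients has a linear resolution. For the converse, the reduction via Fr\"oberg (Theorem~\ref{Thm:Froberg}) and Dirac (Theorem~\ref{Thm:Dirac}), followed by induction on $n$, is sound: the induced subgraph $(G\setminus\{1\})^c$ inherits the perfect elimination order $x_2>\dots>x_n$ because the defining condition concerns only the induced subgraphs on $\{i,\dots,n\}$ for $i\ge 2$; the generating set of $I(G)$ is the disjoint union of the edges through $1$ and the edges avoiding $1$; and your step~(4) correctly isolates the only place chordality enters --- if $\{a,b\}\in E(G)$ with $a,b\notin N_G(1)$, then $a,b\in N_{G^c}(1)$, which is a clique of $G^c$ by the perfect-elimination property at the first vertex, forcing $\{a,b\}\in E(G^c)$, a contradiction. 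The degenerate case $N_G(1)=\emptyset$ is dispatched correctly as well. Whether your argument coincides with the one in \cite{MKA16} cannot be judged from the present paper alone, but the route through Fr\"oberg and Dirac is the natural one and is tacitly relied on in the subsequent proof of Theorem~\ref{Thm:BCM-VS}.
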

	
	Combining the above result with Theorem \ref{Thm:CM-VS}, we obtain the next characterization of the bi-CM graphs.
	\begin{Theorem}\label{Thm:BCM-VS}
		For a finite simple graph $G$, the~following conditions are~equivalent.
		\begin{enumerate}
			\item[\textup{(a)}] $G$ is a bi-CM graph.
			\item[\textup{(b)}] $G^c$ is a chordal graph with perfect elimination order $x_1>\dots>x_n$ and
			$$
			|N_G(i)\cap\{i,\dots,n\}|\ =\ |N_G(j)\cap\{j,\dots,n\}|+(j-i),
			$$
			for all $1\le i\le j\le n$ such that $|N_G(i)\cap\{i,\dots,n\}|,|N_G(j)\cap\{j,\dots,n\}|>0$.
		\end{enumerate}
		In particular, if~any of the equivalent conditions hold, then
		$$
		\pd S/I(G)\ =\ |N_G(i)\cap\{i,\dots,n\}|+(i-1),
		$$
		for any $1\le i\le n$ such that $|N_G(i)\cap\{i,\dots,n\}|>0$.
	\end{Theorem}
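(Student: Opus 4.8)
The plan is to bootstrap from Proposition~\ref{Prop:MKA16}, which provides a canonical vertex splitting of $I(G)$, and then run the inductive Cohen--Macaulay test of Theorem~\ref{Thm:CM-VS}. First I would record the standard reduction: the paper has already observed (via Eagon--Reiner) that $G$ is bi-CM precisely when $I(G)$ is Cohen--Macaulay with a linear resolution, and by Fr\"oberg's Theorem~\ref{Thm:Froberg} together with Proposition~\ref{Prop:MKA16} the existence of a linear resolution is equivalent to $G^{c}$ being chordal, equivalently to $I(G)$ being vertex splittable. Hence the chordality clause in~(b) is automatic once we work in the bi-CM regime, and the whole statement reduces to this: \emph{assuming $G^{c}$ is chordal with perfect elimination order $x_{1}>\dots>x_{n}$, decide when $I(G)$ is Cohen--Macaulay and, in that case, compute $\pd S/I(G)$.}

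I would prove this last assertion by induction on $n$ (equivalently on $|\mathcal{G}(I(G))|$), the cases $I(G)=0$ and $I(G)$ principal being immediate. For the inductive step note that $I(G)\subseteq\mm^{2}$, since $I(G)$ is generated in degree $2$, and that Proposition~\ref{Prop:MKA16} supplies the vertex splitting
$$
I(G)\ =\ x_{1}\bigl(x_{j}:j\in N_{G}(1)\bigr)+I(G\setminus\{1\}),
$$
so that in the notation of Theorem~\ref{Thm:CM-VS} one has $(I(G):x_{1})=I_{1}=(x_{j}:j\in N_{G}(1))$ and $(I(G),x_{1})=(I_{2},x_{1})$ with $I_{2}=I(G\setminus\{1\})$. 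Two observations drive the recursion: first, $I_{1}$ is generated by a subset of the variables, so $S/I_{1}$ is a polynomial ring, whence $I_{1}$ is always Cohen--Macaulay with $\depth S/I_{1}=n-|N_{G}(1)|$ and $|N_{G}(1)|=|N_{G}(1)\cap\{1,\dots,n\}|$; second, $(G\setminus\{1\})^{c}=G^{c}\setminus\{1\}$ is chordal and $x_{2}>\dots>x_{n}$ is a perfect elimination order of it, so the inductive hypothesis applies to $I(G\setminus\{1\})$ regarded in $K[x_{2},\dots,x_{n}]$.

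Now Theorem~\ref{Thm:CM-VS} gives: $I(G)$ is Cohen--Macaulay if and only if $I_{1}$ and $(I_{2},x_{1})$ are Cohen--Macaulay and $\depth S/I_{1}=\depth S/(I_{2},x_{1})$. Since $S/(I_{2},x_{1})\cong K[x_{2},\dots,x_{n}]/I(G\setminus\{1\})$, the Cohen--Macaulayness of $(I_{2},x_{1})$ is exactly that of $I(G\setminus\{1\})$ --- which by induction is the numerical condition of~(b) for $G\setminus\{1\}$ --- while the depth equality becomes $\depth\,K[x_{2},\dots,x_{n}]/I(G\setminus\{1\})=n-|N_{G}(1)|$, equivalently
$$
\pd\,K[x_{2},\dots,x_{n}]/I(G\setminus\{1\})\ =\ |N_{G}(1)|-1 .
$$
Substituting the inductive formula $\pd\,K[x_{2},\dots,x_{n}]/I(G\setminus\{1\})=|N_{G}(i)\cap\{i,\dots,n\}|+(i-2)$ --- valid for every $i\geq2$ with $|N_{G}(i)\cap\{i,\dots,n\}|>0$, since deleting vertex~$1$ leaves each set $N_{G}(i)\cap\{i,\dots,n\}$ unchanged --- and separately treating the degenerate case $I(G\setminus\{1\})=0$, which forces $|N_{G}(1)|=1$, one assembles precisely the numerical condition of~(b), with common value $|N_{G}(1)|=\pd S/I(G)$; unwinding this recursion along the perfect elimination order yields the final displayed formula for $\pd S/I(G)$.

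The step I expect to be the actual work is the last one: arranging the induction so that the chain of conditions ``$\depth S/I_{1}=\depth S/(I_{2},x_{1})$'', imposed at each stage along the perfect elimination order, collapses to the single closed-form equality in~(b). In particular one should first note that for any chordal $G^{c}$ with the given order the indices $i$ with $|N_{G}(i)\cap\{i,\dots,n\}|>0$ form an initial segment --- if $|N_{G}(i)\cap\{i,\dots,n\}|=0$ then the elimination-order condition forces $G$ to have no edges on $\{i+1,\dots,n\}$ --- and one must check that the base case $I(G\setminus\{1\})=0$ is exactly the source of the shift $+1$ in~(b). The remaining ingredients --- the application of Theorem~\ref{Thm:CM-VS} and the behaviour of depth under adjoining the nonzerodivisor $x_{1}$ --- are routine.
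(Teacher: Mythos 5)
Your proposal is correct and follows essentially the same route as the paper: reduce via Eagon--Reiner and Fr\"oberg/Proposition~\ref{Prop:MKA16} to the CM question for a vertex splittable $I(G)$, invoke the canonical vertex splitting $I(G)=x_1(x_j:j\in N_G(1))+I(G\setminus\{1\})$ and Theorem~\ref{Thm:CM-VS}, and induct on $n$ along the perfect elimination order, handling $I(G\setminus\{1\})=0$ separately. Your side observation that the indices $i$ with $|N_G(i)\cap\{i,\dots,n\}|>0$ form an initial segment (because the elimination-order condition on $G^c$ forces $G$ to have no edges in $\{i+1,\dots,n\}$ once $N_G(i)\cap\{i,\dots,n\}=\emptyset$) is accurate, though the paper's induction does not need to make it explicit.
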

	\begin{proof}
		We proceed by induction on $n=|V(G)|$. By~Theorem \ref{Thm:Froberg}, $G$ must be cochordal. Fix $x_1>\dots>x_n$, which is 
		a perfect elimination order of $G^c$. By~Proposition \ref{Prop:MKA16}, $I(G)=x_1(x_j:j\in N_G(1))+I(G\setminus \{1\})$ is a vertex splitting. Applying Theorem \ref{Thm:CM-VS}, $I(G)$ is CM if and only if \linebreak  $J=(x_j:j\in N_G(1))$ and $I(G\setminus \{1\})$ are CM and $\pd S/I(G)=\pd S/J=\pd S/I(G\setminus \{1\})+1$. $J$ is CM because it is an ideal generated by variables and $\pd S/J=|N_G(1)|$. Notice that $x_2>\dots>x_n$ is a perfect elimination order of $(G\setminus \{1\})^c$.
		
		If $I(G\setminus \{1\})=0$, then $\pd S/I(G)=\pd S/J=\pd S/I(G\setminus \{1\})+1=1$ and $I(G)$ is principal, say $I(G)=(x_1x_2)$. In~this case, the thesis~holds.
		
		Suppose now that $I(G\setminus \{1\})\ne0$. Then, by~induction on $n$, $I(G\setminus \{1\})$ is CM if and only if
		\begin{equation}\label{eq:NG1}
			|N_{G\setminus \{1\}}(i)\cap\{i,\dots,n\}|\ =\ |N_{G\setminus \{1\}}(j)\cap\{j,\dots,n\}|+(j-i),
		\end{equation}
		for all $2\le i\le j\le n$ such that $|N_{G\setminus \{1\}}(i)\cap\{i,\dots,n\}|,|N_{G\setminus \{1\}}(j)\cap\{j,\dots,n\}|>0$ and moreover
		\begin{equation}\label{eq:NG2}
			\pd S/I(G\setminus \{1\})\ =\ |N_{G\setminus \{1\}}(i)\cap\{i,\dots,n\}|+(i-2),
		\end{equation}
		for any $2\le i\le n$ such that $|N_{G\setminus \{1\}}(i)\cap\{i,\dots,n\}|>0$.
		
		Notice that $N_G(i)\cap\{i,\dots,n\}=N_{G\setminus \{1\}}(i)\cap\{i,\dots,n\}$ for all $2\le i\le n$. Thus, by 
		combining (\ref{eq:NG1}) and (\ref{eq:NG2}) with the equality $\pd S/I(G)=\pd S/J=\pd S/I(G\setminus \{1\})+1$, we see that $I(G)$ is CM if and only if
		\begin{align*}
			|N_G(1)|\ &=\ |N_G(1)\cap\{1,\dots,n\}|\ =\ |N_{G\setminus \{1\}}(i)\cap\{i,\dots,n\}|+(i-2)+1\\
			&=\ |N_G(i)\cap\{i,\dots,n\}|+(i-1),
		\end{align*}
		for all $2\le i\le n$ such that $|N_{G\setminus\{1\}}(i)\cap\{i,\dots,n\}|>0$.
		
		Thus, we deduce that $|N_G(i)\cap\{i,\dots,n\}|=|N_G(j)\cap\{j,\dots,n\}|+(j-i)$ for all \linebreak  $1\le i\le j\le n$ such that $|N_G(i)\cap\{i,\dots,n\}|,|N_G(j)\cap\{j,\dots,n\}|>0$, as~desired. The~inductive proof is complete.
	\end{proof}
	
	Notice that in the above characterization, the~field $K$ plays no role. In~other words, the~bi-CM property of edge ideals does not depend on the field $K$. This also follows from the work of Herzog and Rahimi \cite[Corollary 1.2 (d)]{HR16}, where other classifications of the bi-CM graphs are~given.\\
	
	We end this paper with a couple of examples of a bi-CM and a non-bi-CM graph.
	\begin{Examples}
		\rm (a) 
		Consider the graph $G$ on five vertices and its complementary graph $G^c$ depicted below in Figure~\ref{fig1}.
		\begin{figure}[H]
			\begin{tikzpicture}[scale=0.9]
				\filldraw (0,0) circle (2pt) node[below]{4};
				\filldraw (2,0) circle (2pt) node[below]{5};
				\filldraw (3.5,0) circle (2pt) node[below]{3};
				\filldraw (1,1) circle (2pt) node[above]{2};
				\filldraw (1,-1) circle (2pt) node[below]{1};
				\draw[-] (0,0) -- (1,1) -- (2,0) -- (3.5,0);
				\draw[-] (0,0) -- (1,-1) -- (2,0);
				\draw[-] (1,1) -- (1,-1);
				\filldraw (8,0) circle (2pt) node[below]{3};
				\filldraw (9.5,0) circle (2pt) node[below]{4};
				\filldraw (11,0) circle (2pt) node[below]{5};
				\filldraw (7.3,1) circle (2pt) node[above]{1};
				\filldraw (8.7,1) circle (2pt) node[above]{2};
				\draw[-] (7.3,1) -- (8,0) -- (11,0);
				\draw[-] (8.7,1) -- (8,0);
				\filldraw (-1,1.5) node[above]{$G$};
				\filldraw (6,1.5) node[above]{$G^c$};
			\end{tikzpicture}
			\caption{\label{fig1}A bi-CM graph.}
		\end{figure}
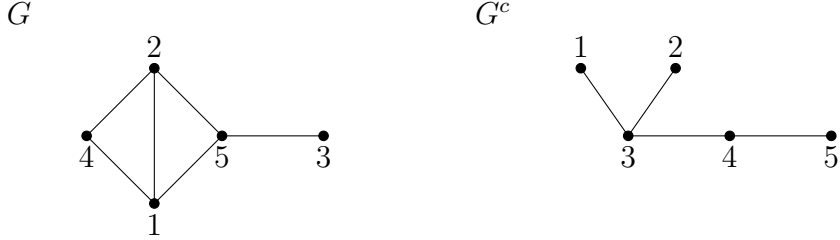
		Notice that $x_1>x_2>x_3>x_4>x_5$ is a perfect elimination order of $G^c$, so that $G^c$ is chordal (Theorem \ref{Thm:Dirac}). We have $|N_G(i)\cap\{i,\dots,5\}|>0$ only for $i=1,2,3$. It is easy to see that condition (b) of Theorem \ref{Thm:BCM-VS} is verified. Hence $G$ is bi-CM, as~one can also verify by using \textit{Macaulay2} \cite{GDS}.\\\\
		(b) Consider the graph $H$ and its complementary graph $H^c$ depicted below in Figure~\ref{fig2}.
		\begin{figure}[H]
			\begin{tikzpicture}[scale=0.9]
				\filldraw (0,0) circle (2pt) node[below]{4};
				\filldraw (2,0) circle (2pt) node[below]{5};
				\filldraw (3.5,0) circle (2pt) node[below]{3};
				\filldraw (1,1) circle (2pt) node[above]{2};
				\filldraw (1,-1) circle (2pt) node[below]{1};
				\draw[-] (0,0) -- (1,1) -- (2,0) -- (3.5,0);
				\draw[-] (0,0) -- (1,-1) -- (2,0);
				\filldraw (8,0) circle (2pt) node[below]{3};
				\filldraw (9.5,0) circle (2pt) node[below]{4};
				\filldraw (11,0) circle (2pt) node[below]{5};
				\filldraw (7.3,1) circle (2pt) node[above]{1};
				\filldraw (8.7,1) circle (2pt) node[above]{2};
				\draw[-] (7.3,1) -- (8,0) -- (11,0);
				\draw[-] (8.7,1) -- (8,0);
				\draw[-] (7.3,1) -- (8.7,1);
				\filldraw (-1,1.5) node[above]{$H$};
				\filldraw (6,1.5) node[above]{$H^c$};
			\end{tikzpicture}
			\caption{\label{fig2} A not bi-CM graph.}
		\end{figure}
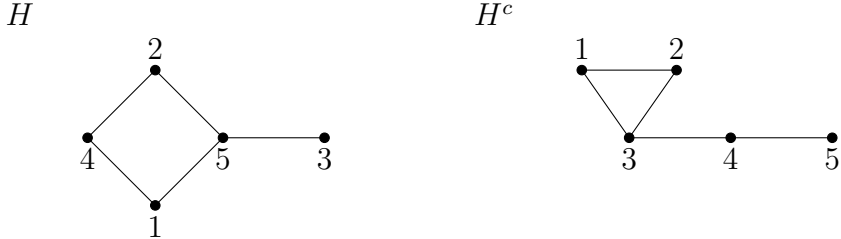
		As before, $x_1>x_2>x_3>x_4>x_5$ is a perfect elimination order of $H^c$, and~$H^c$ is chordal. We have $|N_H(i)\cap\{i,\dots,5\}|>0$ only for $i=1,2,3$. However, condition (b) of Theorem \ref{Thm:BCM-VS} is not verified. Indeed,
		$$
		|N_H(1)\cap\{1,\dots,5\}|=|N_H(2)\cap\{2,\dots,5\}|=|\{4,5\}|=2,
		$$
		but $|N_H(1)\cap\{1,\dots,5\}|\ne|N_H(2)\cap\{2,\dots,5\}|+1$. Hence, $H$ is not bi-CM. We can also verify this by using \textit{Macaulay2} \cite{GDS}. Indeed $J(H)$, the~cover ideal of $H$, is not~CM.
		
	\end{Examples}
	
	\section{Conclusions and~Perspectives} \label{sec:5}
	
	In view of our main Theorem \ref{Thm:CM-VS}, one can ask {\color{black} for} a similar criterion for the sequentially Cohen--Macaulayness of vertex splittable monomial ideals.
	
	\begin{Question}
		Let $I\subset S$ be a vertex splittable ideal, and~let $I=x_iI_1+I_2$ be a vertex splitting of $I$. Can we characterize the sequentially Cohen--Macaulayness of $I$ in terms of $I_1$ and $I_2$?
	\end{Question}
	
	This question could have interesting consequences for the theory of polymatroidal ideals. Indeed, a classification of the sequentially Cohen--Macaulay polymatroidal ideals has long been~elusive.
	
	On the computational side, to~check if a monomial ideal is vertex splittable is far {\color{black} easier} than to check if it admits a Betti splitting. Indeed, it is enough to check recursively Definition \ref{def:vertex splittable}. It could be useful to write a package in \textit{Macaulay2} \cite{GDS} that checks the vertex splittable property and some related properties.  
	
	\vspace{6pt} 
	

	\textit{Acknowledgments}: {We thank the anonymous referees for their careful reading and suggestions that improve the quality of the~paper.}\medskip

 \textit{Conflicts of interest}: {The authors declare no conflicts of~interest. 
}

\end{document}